\documentclass[reqno,10pt,centertags]{amsart} 
\usepackage{amsmath,amsthm,amscd,amssymb,latexsym,esint,upref,stmaryrd,
enumerate,verbatim,yfonts}
\usepackage{color}
\usepackage{hyperref} 
\newcommand*{\mailto}[1]{\href{mailto:#1}{\nolinkurl{#1}}}
\newcommand{\arxiv}[1]{\href{http://arxiv.org/abs/#1}{arXiv:#1}}



\newcommand{\bbC}{{\mathbb{C}}}

\newcommand{\bbN}{{\mathbb{N}}}

\newcommand{\bbR}{{\mathbb{R}}}
\newcommand{\bbT}{{\mathbb{T}}}

\newcommand{\bbZ}{{\mathbb{Z}}}

\newcommand{\cA}{{\mathcal A}}
\newcommand{\cB}{{\mathcal B}}
\newcommand{\cC}{{\mathcal C}}
\newcommand{\cD}{{\mathcal D}}

\newcommand{\cH}{{\mathcal H}}

\newcommand{\cJ}{{\mathcal J}}
\newcommand{\cK}{{\mathcal K}}

\newcommand{\cT}{{\mathcal T}}

\newcommand{\cX}{{\mathcal X}}

\newcommand{\gA}{{\mathfrak{A}}}

\newcommand{\gF}{{\mathfrak{F}}}
\newcommand{\gM}{{\mathfrak{M}}}

\newcommand{\beq}{\begin{equation}}
\newcommand{\enq}{\end{equation}}


\newcommand{\m}{\mu}


\DeclareMathOperator{\supp}{supp}
\DeclareMathOperator{\esssup}{ess.sup}

\DeclareMathOperator{\tr}{tr}

\DeclareMathOperator*{\slim}{s-lim}

\newcommand{\no}{\notag}
\newcommand{\lb}{\label}

\newcommand{\wti}{\widetilde}
\newcommand{\Oh}{O}

\newcommand{\hatt}{\widehat} 
\newcommand{\bi}{\bibitem}

\let\geq\geqslant
\let\leq\leqslant

\makeatletter
\def\theequation{\@arabic\c@equation}


\allowdisplaybreaks 
\numberwithin{equation}{section}

\newtheorem{theorem}{Theorem}[section]
\newtheorem{proposition}[theorem]{Proposition}
\newtheorem{lemma}[theorem]{Lemma}
\newtheorem{corollary}[theorem]{Corollary}
\newtheorem{definition}[theorem]{Definition}
\newtheorem{hypothesis}[theorem]{Hypothesis}
\newtheorem{example}[theorem]{Example}

\theoremstyle{remark}
\newtheorem{remark}[theorem]{Remark}


\begin{document}


\title[Double Operator Integral Methods and Spectral Shift Functions]{Double Operator Integral Methods Applied to Continuity of Spectral Shift Functions}

\author[A.\ Carey]{Alan Carey}  
\address{Mathematical Sciences Institute, Australian National University, 
Kingsley St., Canberra, ACT 0200, Australia 
and School of Mathematics and Applied Statistics, University of Wollongong, NSW, Australia,  2522}  
\email{\mailto{acarey@maths.anu.edu.au}}
\urladdr{\url{http://maths.anu.edu.au/~acarey/}}
  
\author[F.\ Gesztesy]{Fritz Gesztesy}  
\address{Department of Mathematics,
University of Missouri, Columbia, MO 65211, USA}
\email{\mailto{gesztesyf@missouri.edu}}
\urladdr{\url{https://www.math.missouri.edu/people/gesztesy}}

\author[G.\ Levitina]{Galina Levitina} 
\address{School of Mathematics and Statistics, UNSW, Kensington, NSW 2052,
Australia} 
\email{\mailto{g.levitina@student.unsw.edu.au}}

\author[R.\ Nichols]{Roger Nichols}
\address{Mathematics Department, The University of Tennessee at Chattanooga, 
415 EMCS Building, Dept. 6956, 615 McCallie Ave, Chattanooga, TN 37403, USA}
\email{\mailto{Roger-Nichols@utc.edu}}
\urladdr{\url{http://www.utc.edu/faculty/roger-nichols/index.php}}

\author[D.\ Potapov]{Denis Potapov}
\address{School of Mathematics and Statistics, UNSW, Kensington, NSW 2052,
Australia} 
\email{\mailto{d.potapov@unsw.edu.au}}

\author[F.\ Sukochev]{Fedor Sukochev}
\address{School of Mathematics and Statistics, UNSW, Kensington, NSW 2052,
Australia} 
\email{\mailto{f.sukochev@unsw.edu.au}}

\date{\today}
\thanks{A.C., G.L., and F.S. gratefully acknowledge financial support from the Australian
Research Council. R.N. gratefully acknowledges support from a UTC College of Arts and 
Sciences RCA Grant.} 

\dedicatory{Dedicated to the memory of Yuri G.\ Safarov (1958--2015)}

\subjclass[2010]{Primary 47A10, 47A55; Secondary 47A56, 47B10.}
\keywords{Trace ideals, double operator integral techniques, spectral shift functions.}

\begin{abstract} 
We derive two principal results in this note. To describe the first, assume that 
$A$, $B$, $A_n$, $B_n$, $n \in \mathbb{N}$, are self-adjoint operators in a 
complex, separable Hilbert space $\mathcal{H}$, and suppose that 
$\slim_{n \to \infty} (A_n - z_0 I_{\mathcal{H}})^{-1} = (A - z_0 I_{\mathcal{H}})^{-1}$,  
$\slim_{n \to \infty} (B_n - z_0 I_{\mathcal{H}})^{-1} = (B - z_0 I_{\mathcal{H}})^{-1}$ for 
some $z_0 \in \mathbb{C} \backslash \mathbb{R}$. Fix $m \in \mathbb{N}$, $m$ odd, 
$p \in [1,\infty)$, and assume that for all $a \in \mathbb{R} \backslash \{0\}$, 
\begin{align*} 
& T(a):= \big[( A -aiI_{\mathcal{H}})^{-m} 
- ( B -ai I_{\mathcal{H}})^{-m}\big] \in \mathcal{B}_p(\mathcal{H}),   \\ 
& T_n(a) := \big[( A_n -aiI_{\mathcal{H}})^{-m} - ( B_n -aiI_{\mathcal{H}})^{-m}\big] 
\in \mathcal{B}_p(\mathcal{H}),    \\
& \lim_{n \rightarrow \infty} \|T_n(a) - T(a)\|_{\mathcal{B}_p(\mathcal{H})} =0. 
\end{align*} 
Then for any function $f$ in the class $\mathfrak F_{m}(\mathbb{R}) \supset 
C_0^{\infty}(\mathbb{R})$ (cf.\ \eqref{1.1} for details), 
$$
\lim_{n \rightarrow \infty} \big\| [f(A_n) - f(B_n)] - [f(A)
- f(B)]\big\|_{\mathcal{B}_p(\mathcal{H})}=0.  
$$
Moreover, for each $f\in \mathfrak{F}_m(\bbR)$, $p\in [1,\infty)$, we prove the existence of constants 
$a_1,a_2 \in \mathbb{R} \backslash \{0\}$ and $C=C(f,m, a_1, a_2) \in (0,\infty)$ such that
\begin{align*}
&\big\|f(A)-f(B)\|_{\cB_p(\cH)}\leq C\big(\big\|(A-a_1iI_{\cH})^{-m} - (B-a_1iI_{\cH})^{-m}\big\|_{\cB_p(\cH)}\no\\
&\hspace*{3.4cm}+\big\|(A-a_2iI_{\cH})^{-m} - (B-a_2iI_{\cH})^{-m}\big\|_{\cB_p(\cH)}\big), \no
\end{align*}
which permits the use of differences of higher powers $m \in \mathbb{N}$ of resolvents to control 
the $\| \cdot \|_{\mathcal{B}_p(\mathcal{H})}$-norm of the left-hand side $[f(A)-f(B)]$ for 
$f \in \mathfrak{F}_{m}(\mathbb{R})$.

Our second result is concerned with the continuity of spectral shift functions $\xi(\, \cdot \,; B,B_0)$ 
associated with a pair of of self-adjoint operators $(B,B_0)$ in $\mathcal{H}$ with respect to the operator parameter $B$. For brevity, we only describe one of the consequences of our continuity results: Assume that $A_0$ and $B_0$ are fixed self-adjoint operators in 
$\mathcal{H}$, and there exists $m \in \mathbb{N}$, $m$ odd, such that,
$\big[(B_0 - z I_{\mathcal{H}})^{-m} - (A_0 - z I_{\mathcal{H}})^{-m}\big] \in 
\mathcal{B}_1\big(\mathcal{H}\big)$, 
$z \in \mathbb{C} \backslash \mathbb{R}$. For $T$ self-adjoint in $\mathcal{H}$ we denote 
by $\Gamma_m(T)$ the set of all self-adjoint operators $S$ in $\mathcal{H}$ for which the 
containment $\big[(S - z I_{\mathcal{H}})^{-m} - (T - z I_{\mathcal{H}})^{-m}\big] \in 
\mathcal{B}_1(\mathcal{H})$, $z \in \mathbb{C}\backslash \mathbb{R}$, 
holds. Suppose that $B_1\in \Gamma_m(B_0)$ and let $\{B_{\tau}\}_{\tau\in [0,1]}\subset \Gamma_m(B_0)$ 
denote a continuous path (in a suitable topology on $\Gamma_m(B_0)$, cf.\ \eqref{1.9}) from 
$B_0$ to $B_1$ in $\Gamma_m(B_0)$. If $f \in L^{\infty}(\mathbb{R})$, then
$$
\lim_{\tau\to 0^+} \|\xi(\, \cdot \, ; B_{\tau}, A_0) f 
- \xi(\, \cdot \, ; B_0, A_0) f\|_{L^1(\mathbb{R}; (|\nu|^{m+1} + 1)^{-1}d\nu)} = 0.  
$$ 
The fact that higher powers $m\in\mathbb{N}$, $m \geq 2$, of resolvents are involved, permits applications of this circle of ideas to elliptic partial differential operators in $\mathbb{R}^n$, 
$n \in \mathbb{N}$. The methods employed in this note rest on double operator integral (DOI) techniques. 
\end{abstract}

\maketitle



\section{Introduction} \lb{s1} 

We dedicate this note to the memory of Yuri Safarov (1958--2015), a gentle giant in the area 
of spectral theory, whose contribution to the field (see, for instance, the highly influential monograph 
\cite{SV97}) left an indelible impression on our community. His presence is sorely missed. 

We derive two principal results in this note. To describe the first, we introduce the class of 
functions $\gF_m(\bbR)$, $m \in \bbN$ \cite{Ya05}, by
\begin{align}
& \gF_m(\bbR) := \big\{f \in C^2(\bbR) \, \big| \, 
f^{(\ell)} \in L^{\infty}(\bbR); \text{ there exists } 
\varepsilon >0 \text{ and } f_0 = f_0(f) \in \bbC    \no \\
& \quad  \text{ such that } 
\big(d^{\ell}/d \lambda^{\ell}\big)\big[f(\lambda) - f_0 \lambda^{-m}\big] \underset{|\lambda|\to \infty}{=} 
\Oh\big(|\lambda|^{- \ell - m - \varepsilon}\big), \, \ell = 0,1,2 \big\}.     \lb{1.1}
\end{align} 
(It is implied that $f_0 = f_0(f)$ is the same as $\lambda \to \pm \infty$.) One observes that 
$\mathfrak F_m(\mathbb{R}) \supset C_0^{\infty}(\mathbb{R})$, $m \in \bbN$.

Assuming that $A$, $B$, $A_n$, $B_n$, $n \in \bbN$, are self-adjoint operators in a 
complex, separable Hilbert space $\cH$, suppose in addition that
\begin{equation}
\slim_{n \to \infty} (A_n - z_0 I_{\cH})^{-1} = (A - z_0 I_{\cH})^{-1}, \quad 
\slim_{n \to \infty} (B_n - z_0 I_{\cH})^{-1} = (B - z_0 I_{\cH})^{-1},
\end{equation}
for some $z_0 \in \bbC \backslash \bbR$. Fix $m \in \bbN$, $m$ odd, $p \in [1,\infty)$, and 
assume that for each $a \in \bbR \backslash \{0\}$, 
\begin{align} 
\begin{split} 
T(a) &:= \big[( A - aiI_{\cH})^{-m} - ( B - aiI_{\cH})^{-m}\big] \in \cB_p(\cH),   \\  
T_n(a)&:= \big[( A_n - aiI_{\cH})^{-m} - ( B_n - aiI_{\cH})^{-m}\big] \in\cB_p(\cH),   
\end{split} 
\end{align}
and 
\begin{equation}
\lim_{n \rightarrow \infty} \|T_n(a) - T(a)\|_{\cB_p(\cH)} =0.
\end{equation}
Then for any $ f \in \mathfrak F_m(\bbR)$,
\begin{equation}
\lim_{n \rightarrow \infty} \big\| [f(A_n) - f(B_n)] - [f(A)
- f(B)]\big\|_{\cB_p(\cH)}=0.   \lb{1.5}
\end{equation} 

Moreover, for each $f\in \mathfrak{F}_m(\bbR)$, $p\in [1,\infty)$, we prove the existence of constants 
$a_1,a_2 \in \mathbb{R} \backslash \{0\}$ and $C=C(f,m, a_1, a_2) \in (0,\infty)$ such that
\begin{align} 
\begin{split} 
&\big\|f(A)-f(B)\|_{\cB_p(\cH)}\leq C\big(\big\|(A-a_1iI_{\cH})^{-m} - (B-a_1iI_{\cH})^{-m}\big\|_{\cB_p(\cH)}     \\
&\hspace*{3.9cm} + \big\|(A-a_2iI_{\cH})^{-m} - (B-a_2iI_{\cH})^{-m}\big\|_{\cB_p(\cH)}\big). \lb{1.6}
\end{split} 
\end{align}
 
The estimate \eqref{1.6} is of particular interest as it permits to control the 
$\| \cdot \|_{\cB_p(\cH)}$-norm of $[f(A)-f(B)]$, $f \in \gF_m(\bbR)$, in terms of differences 
of higher powers $m \in \bbN$ of resolvents of $A$ and $B$. This is significant in applications to 
elliptic partial differential operators for which differences of sufficiently high integer powers of 
resolvents, but not necessarily the difference of resolvents itself, typically lie in the trace class 
(cf.~also our brief comments following (1.12)).

This circle of ideas is treated in detail in Sections \ref{s2} and \ref{s3}, employing the method of double operator 
integrals (DOI) (cf.\ \cite{BS73}, \cite{BS03}, \cite{Ya05}).

The second main result of this note concerns the continuity of spectral shift functions 
$\xi(\, \cdot \,; B,B_0)$ associated with a pair of of self-adjoint operators $(B,B_0)$ in $\cH$ 
(cf.\ \cite{BY93}, \cite[Ch.~8]{Ya92} for details on $\xi$) 
with respect to the operator parameter $B$. To keep the following sufficiently short, we only describe 
one of the consequences of our continuity results. We note, however, that it was precisely this consequence that was employed in recent applications to Witten index computations for certain 
classes of non-Fredholm Dirac-type operators without a mass gap in 
\cite{CGLPSZ15}--\cite{CGLS15} (see also 
\cite{CGPST15}, \cite{GLMST11}). To set 
this up, assume that $A_0$ and $B_0$ are fixed self-adjoint operators in the Hilbert space $\cH$, 
and there exists $m \in \bbN$, $m$ odd, such that,
\begin{equation}
\big[(B_0 - z I_{\cH})^{-m} - (A_0 - z I_{\cH})^{-m}\big] \in \cB_1\big(\cH\big), 
\quad z \in \bbC \backslash \bbR.       \lb{1.7} \\
\end{equation} 
Next, for $T$ self-adjoint in $\cH$, we introduce $\Gamma_m(T)$ as the set of all 
self-adjoint operators $S$ in $\cH$ for which the containment
\begin{equation}\lb{1.8}
\big[(S - z I_{\cH})^{-m} - (T - z I_{\cH})^{-m}\big] \in \cB_1(\cH), \quad z \in \bbC\backslash \bbR,
\end{equation}
holds. The family of pseudometrics
\begin{equation}\lb{1.9}
d_{m,z}(S_1,S_2) = \big\|(S_2-zI_{\cH})^{-m} - (S_1-zI_{\cH})^{-m}\big\|_{\cB_1(\cH)}, 
\quad S_1,S_2\in \Gamma_m(T), \; z\in\bbC\backslash \bbR,
\end{equation}
generates a topology, $\cT_m(\cD,T)$, on $\Gamma_m(T)$. Finally, suppose that 
$B_1\in \Gamma_m(B_0)$ and let 
\begin{equation} 
\{B_{\tau}\}_{\tau\in [0,1]}\subset \Gamma_m(B_0)
\end{equation}
denote a path from $B_0$ to $B_1$ in $\Gamma_m(B_0)$ such that $B_{\tau}$ depends 
continuously on $\tau\in [0,1]$ with respect to the topology $\cT_m(\cD,T)$. If $f \in L^{\infty}(\bbR)$, then
\begin{equation}
\lim_{\tau\to 0^+} \|\xi(\, \cdot \, ; B_{\tau}, A_0) f 
- \xi(\, \cdot \, ; B_0, A_0) f\|_{L^1(\bbR; (|\nu|^{m+1} + 1)^{-1}d\nu)} = 0,     \lb{1.11} 
\end{equation}
in particular,
\begin{equation}
\lim_{\tau \to 0^+} \int_{\bbR} \xi(\nu; B_{\tau}, A_0) d \nu \, g(\nu) 
= \int_{\bbR} \xi(\nu; B_0, A_0) d \nu \, g(\nu)      \lb{1.12}
\end{equation}
for all $g \in L^{\infty}(\bbR)$ such that 
$\esssup_{\nu \in \bbR} \big|(|\nu|^{m+1} + 1) g(\nu)\big| < \infty$.

We emphasize that in the special case $m=1$, the continuity result \eqref{1.11} for spectral shift functions with respect to trace norm convergence of resolvent differences was derived by Yafaev \cite[Lemma~8.7.5]{Ya92}. To be able to apply this to elliptic partial differential operators 
 (particularly, to Dirac and Schr\"odinger-type operators in $\bbR^n$, $n \geq 2$, cf., e.g., 
 \cite{Ya05}, \cite{Ya07}, \cite[Chs.~3, 9]{Ya10}), one typically needs $m$ sufficiently large, 
 depending on $n$ (especially, for $n \geq 4$). It was precisely this fact and concrete applications 
 to one-dimensional as 
 well as multi-dimensional Dirac-type operators without a mass gap (rendering these Dirac-type operators non-Fredholm) which are approximated by certain pseudo-differential operators, that motivated us to write this note. The Witten index for these types of non-Fredholm Dirac-type 
 operators (a concept extending the Fredholm index) is computed in terms of spectral shift 
 functions and the latter are approximated by the spectral shift functions corresponding to the 
pseudo-differential approximants. Due to limitations of space we will not go into further details at this 
point but refer to \cite{CGLPSZ15}--\cite{CGLS15} (see also \cite{GLMST11}). 
 
Results of the type \eqref{1.11} and extensions thereof are treated in detail in Section \ref{s4}, employing \cite[Lemma~8.7.5]{Ya92}, \cite{Ya05}, and particularly the results derived in 
Sections \ref{s2} and \ref{s3}.

Finally, we briefly describe some of the notation used in this note.   
The symbol $\cB_p(\cH)$, $p \in [1, \infty)$, denotes the standard $\ell^p$-based 
Schatten--von Neumann trace ideals over the complex, separable Hilbert space $\cH$, $\cB_\infty(\cH)$ denotes the ideal of compact operator in $\cH$, and if 
$\cX$ is a Banach space, $\cB(\cX)$ denotes the Banach space of all bounded, linear operators 
on $\cX$.  

The family of strongly right-continuous spectral projections associated to a self-adjoint 
operator $A$ in $\cH$ is denoted by $\{E_A(\lambda)\}_{\lambda \in \bbR}$, with 
$E_A(\lambda) = E_A((-\infty,\lambda])$, 
$\lambda \in \bbR$. 

The notation $\slim_{n\to \infty}T_n$ stands for the strong (i.e., pointwise) limit of a sequence of bounded operators $\{T_n\}_{n=1}^{\infty}$ in $\cH$.

The symbol $C_b(\bbR_+)$ represents bounded, continuous functions on $\bbR_+ = [0,\infty)$, 
and analogously for $C_b^k(\bbR_+)$, $k \in \bbN$.

\section{Norm Bounds Controlled by Powers of Resolvents via DOI} \lb{s2}

The principal aim of this section is to prove \eqref{2.52a} which shows how trace ideal bounds of resolvent powers of self-adjoint operators 
in $\cH$ control those of a sufficiently large class of functions of such operators. 

Throughout, we denote by $\cJ^{A,B}_\phi$ the linear mapping defined by the double operator integral 
\begin{equation}\lb{Z.2}
\cJ^{A,B}_\phi(T)=\int_\bbR\int_\bbR\phi(\lambda,\mu) \, d E_A(\lambda)\, T\, d E_B(\mu), 
\quad T\in \cB(\cH),
\end{equation}
where $E_A, E_B$ is spectral measures corresponding to the self-adjoint (respectively, unitary) operators $A,B$. We refer to \cite{BS03} for the precise definition and general properties of the 
double operator integrals.

It is known that if $\phi(\lambda,\mu)=a_1(\lambda)a_2(\mu)$, $(\lambda,\mu) \in \bbR^2$, for some bounded functions $a_1$ and $a_2$ on $\bbR$, then 
\begin{equation}\lb{Z.3}
\cJ_\phi^{A,B}(T)=a_1(A)Ta_2(B).
\end{equation}

Depending on the function $\phi$, the operator $\cJ^{A,B}_\phi(T)$ is bounded.  
Below we will recall a result describing the class of functions $\phi$ such that
\begin{equation}\lb{Z.4a}
\cJ^{A,B}_\phi:\cB_p(\cH)\to \cB_p(\cH),\; p \in [1, \infty), 
\quad \cJ^{A,B}_\phi:\cB(\cH)\to \cB(\cH),
\end{equation} 
is a bounded operator. We introduce 
\begin{align}
\begin{split} 
\gM_p &:= \big\{\phi \in L^{\infty}(\bbR^2;d\rho) \, \big| \, \cJ^{A,B}_\phi \in \cB(\cB_p(\cH))\big\}, 
\quad p \in [1,\infty), \\
\gM_{\infty} &:= \big\{\phi \in L^{\infty}(\bbR^2;d\rho) \, \big| \, \cJ^{A,B}_\phi \in \cB(\cB(\cH))\big\},    \lb{Z.4A} 
\end{split} 
\end{align}
where $\rho = \rho_A \otimes \rho_B$ denotes the product measure of $\rho_A$ and $\rho_B$, 
the latter are suitable (scalar-valued) control measures for $E_A$ and $E_B$, respectively. (E.g., 
$\rho_A(\cdot) = \sum_{j \in J} (e_j, E_A(\cdot) e_j)_{\cH}$, with $\{e_j\}_{j \in J}$ a complete orthonormal system in $\cH$, $J \subseteq \bbN$ an appropriate index set, and analogously 
for $\rho_B$.) In addition, we set 
\begin{equation} 
\|\phi\|_{\gM_p} := \|\cJ^{A,B}_\phi\|_{\cB(\cB_p(\cH))}, \; p \in [1,\infty), \quad 
\|\phi\|_{\gM_{\infty}} := \|\cJ^{A,B}_\phi\|_{\cB(\cB(\cH))}.  
\end{equation} 
For simplicity, we denote 
\begin{equation}
\gM:=\gM_1=\gM_\infty, 
\end{equation}
and (cf.\ \cite[Sect.~4]{BS03}) 
\begin{equation} 
\quad \|\phi\|_{\gM} := \|\phi\|_{\gM_1} = \|\phi\|_{\gM_{\infty}}, \quad \phi \in \gM.
\end{equation}

\begin{remark}\lb{rZ.1}
By interpolation, the inclusion $\phi\in\gM$ implies that $\phi\in\gM_p$ for any $p \in [1,\infty)$, 
and $\|\phi\|_{\gM_p}\leq \|\phi\|_{\gM}$, $p \in [1,\infty)$.
\hfill $\diamond$
\end{remark}

We also recall the following result.
\begin{theorem}\cite[Theorem 4.1]{BS03}\lb{tZ.2} Assume that $A$ and $B$ are self-adjoint operators in $\cH$.~If the function $\phi(\, \cdot \, , \, \cdot \,)$ admits a representation of the form
\begin{equation}\lb{Z.4}
\phi(\lambda,\mu)=\int_\Omega \alpha(\lambda,t)\beta(\mu,t) \, d\eta(t), \quad (\lambda,\mu) \in \bbR^2, 
\end{equation}
where $(\Omega,d\eta(t))$ is an auxiliary measure space and 
\begin{equation}\lb{Z.5}
C_{\alpha}^2:=\sup_{\lambda \in \bbR}\int_\Omega|\alpha(\lambda,t)|^2 \, d\eta(t)<\infty,  \quad 
C_{\beta}^2:=\sup_{\mu\in\bbR}\int_\Omega|\beta(\mu,t)|^2 \, d\eta(t)<\infty, 
\end{equation}
then $\phi\in \gM$ and 
\begin{equation} 
\|\phi\|_{\gM} \leq C_{\alpha} C_{\beta}.
\end{equation}  
\end{theorem}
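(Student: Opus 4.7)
The plan is to represent the double operator integral $\cJ^{A,B}_\phi(T)$ as a weak operator integral of elementary ``sandwich'' operators, and then estimate directly, using Cauchy--Schwarz for the $\cB(\cH)$-bound and trace duality for the $\cB_1(\cH)$-bound. The starting point is identity \eqref{Z.3}: for each fixed $t\in\Omega$ the product symbol $\alpha(\lambda,t)\beta(\mu,t)$ produces $\alpha(A,t)\, T\, \beta(B,t)$, so integrating in $t$ and interchanging with the DOI formally yields
\begin{equation*}
\cJ^{A,B}_\phi(T) \;=\; \int_\Omega \alpha(A,t)\, T\, \beta(B,t)\, d\eta(t),
\end{equation*}
understood in the weak operator sense. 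The first step is to legitimize this identity: I would test both sides against $f,g \in \cH$, insert the spectral resolutions of $A$ and $B$ into $(\alpha(A,t)^*g, T\beta(B,t)f)_{\cH}$, and use Fubini on the product measure $\rho_A\otimes\rho_B\otimes\eta$ to reconstruct the symbol $\phi$ from its representation \eqref{Z.4}.

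Granted this sandwich representation, the $\cB(\cH)$-bound is a direct two-step Cauchy--Schwarz argument. I would estimate
\begin{equation*}
|(g,\cJ^{A,B}_\phi(T)f)_{\cH}| \leq \|T\|_{\cB(\cH)} \int_\Omega \|\alpha(A,t)^*g\|_{\cH}\,\|\beta(B,t)f\|_{\cH}\, d|\eta|(t),
\end{equation*}
split by Cauchy--Schwarz in $t$, and then use the spectral theorem with Fubini to obtain
\begin{equation*}
\int_\Omega \|\alpha(A,t)^*g\|_{\cH}^2\, d|\eta|(t) = \int_{\bbR}\!\Big(\!\int_\Omega |\alpha(\lambda,t)|^2 d|\eta|(t)\Big)\, d(g,E_A(\lambda)g)_{\cH} \leq C_{\alpha}^2 \|g\|_{\cH}^2,
\end{equation*}
and analogously for $\beta$ and $f$, directly from \eqref{Z.5}. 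Combining yields $\|\cJ^{A,B}_\phi\|_{\cB(\cB(\cH))} \leq C_{\alpha} C_{\beta}$, i.e., $\phi \in \gM_\infty$ with the asserted bound.

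For the $\cB_1(\cH)$-bound I would use trace duality. Setting $\widetilde\phi(\mu,\lambda):=\phi(\lambda,\mu)$ (which admits the symmetric integral representation with $\alpha,\beta$ and $A,B$ swapped), the previous step applied to $\cJ^{B,A}_{\widetilde\phi}$ gives $\|\cJ^{B,A}_{\widetilde\phi}\|_{\cB(\cB(\cH))} \leq C_{\alpha} C_{\beta}$. Then for $T\in \cB_1(\cH)$ and $S\in\cB(\cH)$, cyclicity of the trace applied to the sandwich representations yields
\begin{equation*}
\tr\bigl(\cJ^{A,B}_\phi(T)\, S\bigr) \;=\; \int_\Omega \tr\bigl(T\, \beta(B,t)\, S\, \alpha(A,t)\bigr)\, d\eta(t) \;=\; \tr\bigl(T\, \cJ^{B,A}_{\widetilde\phi}(S)\bigr),
\end{equation*}
so the identification $\cB_1(\cH)^\ast = \cB(\cH)$ via the trace pairing delivers the trace-class bound, hence $\phi\in\gM_1=\gM$. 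The main obstacle, and where I expect to spend the most care, is the first step: checking weak measurability of $t\mapsto \alpha(A,t)\,T\,\beta(B,t)$, absolute $\eta$-integrability of the relevant matrix elements, and a precise identification of the resulting weak integral with $\cJ^{A,B}_\phi(T)$ as defined through the spectral product measure (cf.\ \cite{BS03}). Once that is settled, the Cauchy--Schwarz and duality estimates above are essentially mechanical consequences of the hypotheses \eqref{Z.5}, and the intermediate $\cB_p(\cH)$ cases follow by complex interpolation as in Remark \ref{rZ.1}.
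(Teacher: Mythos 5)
The paper itself offers no proof of this statement; it is quoted verbatim from \cite[Theorem 4.1]{BS03} (the proofs supplied in Section \ref{s2} are for Theorem \ref{tZ.3} and Proposition \ref{pZ.4} only). Your argument is precisely the classical Birman--Solomyak proof and is correct: the weak sandwich representation $\cJ^{A,B}_\phi(T)=\int_\Omega \alpha(A,t)\,T\,\beta(B,t)\,d\eta(t)$ together with the double Cauchy--Schwarz estimate yields $\|\cJ^{A,B}_\phi\|_{\cB(\cB(\cH))}\le C_\alpha C_\beta$, and duality transfers the same constant to $\cB_1(\cH)$, which is exactly what $\phi\in\gM=\gM_1=\gM_\infty$ with $\|\phi\|_{\gM}\le C_\alpha C_\beta$ requires. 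One step should be phrased more carefully: as written you apply cyclicity of the trace to $\tr\bigl(\cJ^{A,B}_\phi(T)S\bigr)$ before knowing that $\cJ^{A,B}_\phi(T)\in\cB_1(\cH)$; either run the computation first for finite-rank $T$ and conclude by density, or read the identity $\tr\bigl(T\,\cJ^{B,A}_{\widetilde\phi}(S)\bigr)$ as defining a bounded linear functional on $\cB_\infty(\cH)$, hence a trace-class operator of norm at most $C_\alpha C_\beta\|T\|_{\cB_1(\cH)}$, which is then identified with $\cJ^{A,B}_\phi(T)$ by testing against rank-one $S$.
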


In the proof of the main theorem of this section, we need two results from \cite{Ya05} and \cite{BS03}. Since these results were stated without proof in those papers, we now supply a proof for convenience of the reader.

\begin{theorem}\cite[Theorem 5.2]{BS03}\lb{tZ.3} Assume that $A$ and $B$ are self-adjoint operators in $\cH$.~If there exist $0\leq m_1<1$ and $1<m_2$ such that
\begin{equation}\lb{Z.6}
\sup_{\m\in\bbR}\int_{\bbR}\big(|\xi|^{m_1}+|\xi|^{m_2}\big)\big|\hatt{\phi}(\xi,\mu)\big|^2\, d\xi 
= C_0^2<\infty,
\end{equation}
where $\hatt{\phi}(\xi,\mu)$ stands for the partial Fourier transform of $\phi$ with respect to the first variable, 
\begin{equation}
\hatt{\phi}(\xi,\mu) = (2 \pi)^{-1} \int_{\bbR} \phi(\lambda, \mu) e^{-i \xi \lambda} \, d \lambda, 
\quad (\xi, \mu) \in \bbR^2,
\end{equation} 
then $\phi\in \gM$ and 
\begin{equation}
\|\phi\|_{\gM}\leq CC_0, 
\end{equation}
where the constant $C=C(m_1,m_2)>0$ does not depend on $E_A$ or $E_B$. 
\end{theorem}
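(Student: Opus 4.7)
The plan is to apply Theorem \ref{tZ.2} after producing an explicit integral representation of $\phi$. Fourier inversion in the first variable suggests the ansatz $\phi(\lambda,\mu)=\int_\bbR \hatt{\phi}(\xi,\mu)\,e^{i\xi\lambda}\,d\xi$, which is pointwise meaningful: Cauchy--Schwarz together with \eqref{Z.6} and the integrability of $(|\xi|^{m_1}+|\xi|^{m_2})^{-1}$ on $\bbR$ (which uses exactly the two-sided constraint $m_1<1<m_2$) shows that $\hatt{\phi}(\,\cdot\,,\mu)\in L^1(\bbR;d\xi)$ for a.e.\ $\mu$. However, the naive choice $\alpha(\lambda,\xi)=e^{i\xi\lambda}$, $\beta(\mu,\xi)=\hatt{\phi}(\xi,\mu)$ cannot satisfy \eqref{Z.5}, since $|e^{i\xi\lambda}|\equiv 1$ is not square-integrable on $\bbR$.

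The remedy is to split the frequency range at $|\xi|=1$ and redistribute matching weights on each half. Write $\phi=\phi_1+\phi_2$, with $\phi_j$ obtained by restricting the inversion integral to $\Omega_1:=\{|\xi|\le 1\}$ and $\Omega_2:=\{|\xi|>1\}$, respectively. For $\phi_1$ I would apply Theorem \ref{tZ.2} on $\Omega_1$ with
\begin{equation*}
\alpha_1(\lambda,\xi)=|\xi|^{-m_1/2}e^{i\xi\lambda}, \qquad \beta_1(\mu,\xi)=|\xi|^{m_1/2}\hatt{\phi}(\xi,\mu),
\end{equation*}
yielding $C_{\alpha_1}^2=\int_{|\xi|\le 1}|\xi|^{-m_1}\,d\xi=2/(1-m_1)$ (finite precisely because $m_1<1$) and $C_{\beta_1}^2\le C_0^2$ by \eqref{Z.6}. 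Symmetrically, for $\phi_2$ on $\Omega_2$ take
\begin{equation*}
\alpha_2(\lambda,\xi)=|\xi|^{-m_2/2}e^{i\xi\lambda}, \qquad \beta_2(\mu,\xi)=|\xi|^{m_2/2}\hatt{\phi}(\xi,\mu),
\end{equation*}
giving $C_{\alpha_2}^2=2/(m_2-1)$ (finite precisely because $m_2>1$) and $C_{\beta_2}^2\le C_0^2$. Theorem \ref{tZ.2} then delivers $\phi_1,\phi_2\in\gM$ with $\|\phi_j\|_\gM\le C_{\alpha_j} C_{\beta_j}$, $j=1,2$.

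Combining the two pieces via the triangle inequality on $\gM$ produces
$\|\phi\|_\gM\le\bigl(\sqrt{2/(1-m_1)}+\sqrt{2/(m_2-1)}\bigr)C_0$, which is the desired bound $\|\phi\|_\gM \le CC_0$ with $C=C(m_1,m_2)$ manifestly independent of the spectral measures $E_A,E_B$. I anticipate the one point genuinely requiring care to be the justification, at the level of the Riemann--Stieltjes construction of DOI developed in \cite{BS03}, that the scalar decomposition $\phi=\phi_1+\phi_2$ lifts to the operator identity $\cJ^{A,B}_\phi=\cJ^{A,B}_{\phi_1}+\cJ^{A,B}_{\phi_2}$ and that each representation \eqref{Z.4} for $\phi_j$ genuinely reproduces $\cJ^{A,B}_{\phi_j}$; a standard truncation to $\{1/N\le|\xi|\le N\}$ followed by a passage to the limit should handle any convergence issues at $\xi=0$ and at $|\xi|=\infty$. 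The decisive role of $m_1<1<m_2$ is visibly to force convergence of the two $L^2$-mass integrals defining the $C_{\alpha_j}$ in their respective low- and high-frequency regimes, while the weight redistribution $|\xi|^{\pm m_j/2}$ ensures that the conjugate $C_{\beta_j}$ integrals are exactly those furnished by the hypothesis \eqref{Z.6}.
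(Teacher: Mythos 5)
Your proposal is correct and follows essentially the same route as the paper: Fourier inversion in the first variable, then a redistribution of weights between $\alpha$ and $\beta$ so that Theorem \ref{tZ.2} applies, with $m_1<1<m_2$ entering exactly through the integrability of the reciprocal weight. The only difference is cosmetic: rather than splitting at $|\xi|=1$ and using the two weights $|\xi|^{\mp m_j/2}$ on the two pieces, the paper takes the single combined weight $\bigl(|\xi|^{m_1}+|\xi|^{m_2}\bigr)^{\mp 1/2}$ on all of $\bbR$, which sidesteps the decomposition $\phi=\phi_1+\phi_2$ and the triangle inequality entirely.
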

\begin{proof}
In view of
\begin{equation}\lb{Z.8}
m_1 < 1 < m_2,
\end{equation}
one obtains 
\begin{align}\lb{Z.9}
& \int_{\bbR} \bigl( \left| \xi \right|^{m_1} + \left| \xi
\right|^{m_2} \bigr)^{-1}\,  d \xi = 2 \,
\int_0^{+\infty} \frac { d r}{\left| r
\right|^{m_1} + \left| r \right|^{m_2}} =: C \in (0, \infty).
\end{align}
That is, $f_{m_1,m_2}(\xi)=(|\xi|^{m_1}+|\xi|^{m_2})^{-1/2}$, $m_1 < 1 < m_2$, satisfies 
$f_{m_1,m_2} \in L^2(\bbR)$.  Therefore, by \eqref{Z.6} and H\"older's inequality, one obtains
\begin{align}
& \int_{\bbR} \bigl| \hatt \phi(\xi, \mu) \bigr|\, d \xi 
= \int_{\bbR} \left[ \bigl( \left| \xi \right|^{m_1} +
\left| \xi \right|^{m_2} \bigr)^{\frac 12}\, \bigl|\hatt
\phi(\xi, \mu) \bigr| \right] \bigl( \left| \xi
\right|^{m_1} + \left| \xi \right|^{m_2} \bigr)^{-\, \frac 12}\,
d \xi \no \\
& \quad \leq \bigg(\int_{\bbR}\left[ \bigl( \left| \xi \right|^{m_1} +
\left| \xi \right|^{m_2} \bigr)^{\frac 12}\, \bigl|\hatt
\phi(\xi, \mu) \bigr| \right]^2d\xi\bigg)^{1/2}\; \bigg( \int_{\bbR}\bigl( \left| \xi
\right|^{m_1} + \left| \xi \right|^{m_2} \bigr)^{-1}\,
d \xi\bigg)^{1/2} \no \\
& \quad \leq C_0  \bigg( \int_{\bbR} \bigl( \left| \xi
\right|^{m_1} + \left| \xi \right|^{m_2} \bigr)^{-1}\,  d
\xi\bigg)^{1/2},    \lb{Z.7} 
\end{align}
uniformly for $\mu \in \bbR$.  Hence, 
\begin{equation}\lb{Z.10}
\hatt \phi(\, \cdot \,, \mu) \in L^1(\bbR),
\end{equation}
and
\begin{equation}\lb{Z.11a}
\sup_{\mu\in \bbR} \big\|\hatt \phi(\, \cdot \,,\mu)\big\|_{L^1(\bbR)}<\infty.
\end{equation}
By the inverse Fourier transform  
\begin{align}\lb{Z.11}
\begin{split} 
\phi(\lambda, \mu)&= \int_{\bbR} \hatt \phi(\xi, \mu) \,
e^{i\xi \lambda}\,  d \xi \\
&= \int_{\bbR} e^{i\xi \lambda} \left[ \bigl( \left| \xi \right|^{m_1} + \left|
\xi \right|^{m_2} \bigr)^{1/2}\, \hatt \phi (\xi, \mu)
\right] \big(|\xi|^{m_1} + | \xi|^{m_2} \big)^{- 1/2} \, d\xi.    
\end{split} 
\end{align}

Next, introduce the functions
\begin{equation}\lb{Z.12}
\alpha(\lambda, t) = e^{i\lambda t}\bigl(
\left| t \right|^{m_1} + \left| t
\right|^{m_2} \bigr)^{-\, \frac 12}, \quad \beta(\mu, t)
= \bigl( \left| t \right|^{m_1} + \left| t \right|^{m_2}
\bigr)^{\frac 12}\, \hatt \phi (t, \mu).
\end{equation}
By \eqref{Z.6} and \eqref{Z.9}, the functions $\alpha$ and $\beta$ satisfy the condition of Theorem \ref{tZ.2} with respect to the measure space $(\Omega,d\eta(t)) =(\bbR,dt)$.  Hence, by Theorem \ref{tZ.2}, $\phi\in\gM$ and $\|\phi\|_\gM\leq C C_0$, where the constant $C=C(m_1,m_2)$ does not depend on the spectral measures $E_A$ and $E_B$.
\end{proof}

\begin{proposition}\cite[Proposition 5.2]{Ya05}\lb{pZ.4} Assume that $A$ and $B$ 
are self-adjoint operators in $\cH$.~Suppose that the function $K(\lambda,\mu)$ on $\bbR^2$ satisfies
\begin{equation}\lb{Z.13}
|K(\lambda,\mu)| \leq C_K <\infty, \quad (\lambda,\mu) \in \bbR^2,
\end{equation}
and is differentiable with respect to $\lambda$ with
\begin{equation}\lb{Z.14}
\bigg|\frac{\partial K(\lambda,\mu)}{\partial\lambda}\bigg|\leq \wti C_K \big(1+\lambda^2\big)^{-1}, 
\quad (\lambda,\mu) \in \bbR^2, 
\end{equation}
where the constant $\wti C_K$ is independent of $\mu$.  Assume, in addition, that for every fixed $\mu\in\bbR$
\begin{equation}\lb{Z.15}
\lim_{\lambda\to-\infty}K(\lambda,\mu) = \lim_{\lambda\to+\infty} K(\lambda,\mu),
\end{equation}
where the limits exist by \eqref{Z.14}. Then $\cJ_K^{A,B}\in \cB(\cB(\cH))$ and $\cJ_K^{A,B}\in \cB(\cB_p(\cH))$, $p \in [1,\infty)$.
\end{proposition}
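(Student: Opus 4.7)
The plan is to bring the statement into the scope of Theorem \ref{tZ.3} via the partial Fourier transform in $\lambda$. Since $K(\, \cdot \,, \mu)$ is only bounded---not integrable---in $\lambda$, one first has to strip off its boundary value before Fourier tools can be applied.

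First I would decompose $K(\lambda,\mu) = L(\lambda,\mu) + K_\infty(\mu)$, where
\begin{equation*}
K_\infty(\mu) := \lim_{\lambda \to \pm \infty} K(\lambda,\mu),
\end{equation*}
the limit being unambiguous by \eqref{Z.15} and satisfying $\|K_\infty\|_{L^\infty(\bbR)} \leq C_K$. Via the product identity \eqref{Z.3}, the $\lambda$-independent piece contributes $\cJ^{A,B}_{K_\infty}(T) = T K_\infty(B)$, which belongs to $\cB(\cB(\cH))$ and to $\cB(\cB_p(\cH))$ for every $p \in [1,\infty)$, with norm at most $C_K$. Hence it suffices to prove $L \in \gM$, since Remark \ref{rZ.1} then transfers the conclusion to every $\gM_p$.

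Next I control $L$ and $\partial_\lambda L = \partial_\lambda K$ uniformly in $\mu$. Writing
\begin{equation*}
L(\lambda,\mu) = - \int_\lambda^\infty \frac{\partial K}{\partial \lambda'}(\lambda',\mu) \, d\lambda'
\end{equation*}
and invoking \eqref{Z.14} yields $|L(\lambda,\mu)| \leq C (1+|\lambda|)^{-1}$, so $L(\, \cdot \,, \mu) \in L^2(\bbR)$ uniformly in $\mu$; likewise $\partial_\lambda K(\, \cdot \,, \mu) \in L^1(\bbR) \cap L^2(\bbR)$ uniformly. Plancherel's theorem combined with the identity $\widehat{\partial_\lambda L}(\xi,\mu) = i\xi \hatt L(\xi,\mu)$ then delivers uniform bounds on both $\|\hatt L(\, \cdot \,, \mu)\|_{L^2(\bbR)}$ and $\|\xi\hatt L(\, \cdot \,, \mu)\|_{L^2(\bbR)}$.

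With $m_1 = 0$ and $m_2 = 3/2$ (any $m_2 \in (1,2)$ works), the $|\xi|^{m_1}$-weighted integral in \eqref{Z.6} reduces to $\|\hatt L(\, \cdot \,, \mu)\|_{L^2}^2$, while the $|\xi|^{m_2}$-weighted integral is handled by H\"older's inequality with $\theta = m_2/2 \in (1/2,1)$:
\begin{equation*}
\int_\bbR |\xi|^{m_2} |\hatt L(\xi,\mu)|^2 \, d\xi \leq \bigg(\int_\bbR |\xi|^2 |\hatt L|^2 \, d\xi \bigg)^{\theta} \bigg(\int_\bbR |\hatt L|^2 \, d\xi\bigg)^{1-\theta}.
\end{equation*}
Both estimates are uniform in $\mu$, so Theorem \ref{tZ.3} yields $L \in \gM$, completing the reduction. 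The principal obstacle is the non-integrability of $K(\, \cdot \,, \mu)$, which blocks a direct Fourier attack; hypothesis \eqref{Z.15} is precisely what makes $L = K - K_\infty$ decay at \emph{both} ends of $\bbR$ (and hence lie in $L^2$), whereas unequal limits at $\pm \infty$ would leave an uncompensated jump that the $L^2$ Fourier argument could not absorb.
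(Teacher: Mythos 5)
Your proposal is correct and follows essentially the same route as the paper: decompose $K(\lambda,\mu)$ into the boundary value $k(\mu)=\lim_{\lambda\to\pm\infty}K(\lambda,\mu)$ (handled via \eqref{Z.3}) plus a remainder that is $O(|\lambda|^{-1})$ at both ends, verify the hypothesis of Theorem \ref{tZ.3} for that remainder via Plancherel, and transfer to $\cB_p(\cH)$ by Remark \ref{rZ.1}. The only (immaterial) difference is your choice $m_2=3/2$ with an interpolation step, where the paper takes $m_1=0$, $m_2=2$ directly from the two Plancherel bounds.
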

\begin{proof}
By \eqref{Z.13} and  \eqref{Z.15} the function 
\begin{equation}\lb{Z.16}
k(\mu):=\lim_{\lambda\to\pm\infty}K(\lambda,\mu), \quad \mu \in \bbR,
\end{equation}
is well-defined and bounded on $\bbR$. 

We set 
\begin{equation}\lb{Z.17}
h (\lambda, \mu) := K(\lambda, \mu) - k(\mu), \quad (\lambda,\mu) \in \bbR^2,
\end{equation}
and claim that this function satisfies the conditions of Theorem \ref{tZ.3}.  Indeed, since 
\begin{equation}\lb{Z.18}
\frac {\partial h}{\partial \lambda} = \frac {\partial
K}{\partial \lambda},
\end{equation}
one infers from \eqref{Z.14} that 
\begin{equation}\lb{Z.19}
\frac {\partial h}{\partial \lambda} (\,\cdot\,, \mu) \in
L^2(\bbR),\quad \mu\in \bbR, \, \text{ with } \, 
\sup_{\m\in \bbR}\bigg\|\frac{\partial h}{\partial \lambda}(\, \cdot \,,\mu) \bigg\|_{L^2(\bbR)}<\infty.
\end{equation}
Furthermore, by the definition of the function $h$,
\begin{equation}
\lim_{\lambda
\rightarrow \pm \infty} h(\lambda, \mu) = 0,
\end{equation}
and therefore,
\begin{equation}
h(\lambda,\mu)=
\begin{cases}
\displaystyle - \int^{+\infty}_\lambda \dfrac {\partial h}{\partial \lambda} (t,
\mu)\, dt,& \lambda>0,\\[4mm]
\displaystyle \int_{-\infty}^\lambda \dfrac {\partial
h}{\partial \lambda} (t, \mu)\, dt,& \lambda<0.
\end{cases}
\end{equation}
Hence, by~\eqref{Z.14} for $\lambda>0$,
\begin{equation}
|h(\lambda,\mu)|\leq \int^{+\infty}_\lambda\Big|\frac {\partial h}{\partial \lambda} (t,
    \mu)\Big|\, dt \leq C \int^{+\infty}_\lambda (1+t^2)^{-1}\, dt,
    \end{equation}
for an appropriate constant $C>0$.  A similar estimate for $\lambda<0$ yields
\begin{equation}
h(\lambda, \mu) = O \big(| \lambda|^{-1}\big)\ \ \text{if}\ \
\lambda \rightarrow \pm \infty,
\end{equation}
uniformly for $ \mu \in \bbR$.  Hence, $ h (\cdot, \mu) \in L^2(\mathbb R) $ and by Parseval's identity, one obtains
\begin{equation}\lb{Z.20}
\sup_{\mu \in \bbR}\, \int_{\bbR}
|\xi |^2 \big| \hatt h(\xi, \mu) \big|^2\,
d \xi < \infty.
\end{equation}
That is, the function $h(\cdot, \cdot)$ satisfies the condition  Theorem~\ref{tZ.3} with 
\begin{equation}\lb{Z.21}
 m_1 = 0 \ \
\text{and}\ \ m_2 = 2. 
\end{equation}

Hence, Theorem~\ref{tZ.3} implies that the operator $\cJ_h^{A,B}:\cB(\cH)\to \cB(\cH)$ is bounded. 
Furthermore, since $K(\lambda,\mu)=h(\lambda,\mu)+k(\mu)$, $(\lambda,\mu) \in \bbR^2$, 
\eqref{Z.3} for the operator $\cJ_K^{A,B}$ implies 
\begin{equation}\lb{Z.22}
\cJ_K^{A,B}(T) = \cJ_h^{A,B}(T) + Tk(B).
\end{equation}
Since the function $k$ is bounded one infers that  the operator $\cJ_K^{A,B}$ is bounded 
on $\cB(\cH)$. Finally, Remark \ref{rZ.1} implies that the operator $\cJ_K^{A,B}$ is also bounded on any $\cB_p(\cH)$, $p\in [1,\infty)$.
\end{proof}

\begin{corollary}\lb{cZ.5}
The norms $\big\|\cJ_K^{A,B}\big\|_{\cB(\cB(\cH))}$, $\big\|\cJ_K^{A,B}\big\|_{\cB(\cB_p(\cH))}$, 
$p\in[1,\infty)$, do not depend on the spectral measures $E_A$ and $E_B$.
\end{corollary}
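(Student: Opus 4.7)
The plan is to audit the proof of Proposition~\ref{pZ.4} with care and read off that every quantity bounding $\|\cJ_K^{A,B}\|$ is expressible in terms of $K$ (through $C_K$, $\wti C_K$, and intrinsic derived objects) together with universal constants, carrying no residual dependence on $E_A$ or $E_B$. Corollary~\ref{cZ.5} then records this structural feature of the proof as a statement about the norms themselves.

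First I would freeze the decomposition $K(\lambda,\mu)=h(\lambda,\mu)+k(\mu)$ from \eqref{Z.16}--\eqref{Z.17}; both $h$ and $k$ are pointwise-defined functionals of $K$ alone. The identity \eqref{Z.22} then splits the DOI as $\cJ_K^{A,B}(T) = \cJ_h^{A,B}(T) + T\,k(B)$, so the task reduces to analyzing each summand.

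For $\cJ_h^{A,B}$ I would apply Theorem~\ref{tZ.3} with $(m_1,m_2)=(0,2)$ through the explicit integral representation \eqref{Z.11}--\eqref{Z.12}, realizing $\cJ_h^{A,B}(T)=\int_\bbR e^{iAt}\,T\,\beta(B,t)\,dt$ with $\beta(\mu,t)=(1+|t|^2)^{1/2}\hatt h(t,\mu)$. The unitary exponentials $e^{iAt}$ have $\cB(\cH)$-norm~$1$ uniformly in $A$ and $t$, while $\|\beta(B,t)\|_{\cB(\cH)}\le\sup_{\mu\in\bbR}|\beta(\mu,t)|$ is a supremum over all of $\bbR$. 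Cauchy--Schwarz then delivers $\|\cJ_h^{A,B}\|_{\cB(\cB(\cH))}\le C(0,2)\cdot C_0(h)$, with $C(0,2)$ the universal integral in \eqref{Z.9} and $C_0(h)=\sup_\mu\bigl(\int_\bbR(1+|t|^2)|\hatt h(t,\mu)|^2\,dt\bigr)^{1/2}$ an intrinsic functional of $K$ (by \eqref{Z.19}--\eqref{Z.20} applied to $h$). For the second summand, $\|T\mapsto Tk(B)\|_{\cB(\cB(\cH))}\le\|k(B)\|_{\cB(\cH)}\le\|k\|_{L^\infty(\bbR)}\le C_K$. Remark~\ref{rZ.1} then promotes both bounds to the $\cB_p(\cH)$-norms for every $p\in[1,\infty)$.

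The hard part is upgrading this from a common upper bound to the genuine \emph{independence} of the norms themselves. My plan is to observe that the representation above factors $\cJ_K^{A,B}$ through structures that are norm-preserving on the $A$-side (the unitary group $e^{iAt}$) and that the functional calculus on the $B$-side enters only through pointwise suprema taken over all of $\bbR$ rather than over $\sigma(B)$. Because the Cauchy--Schwarz step extracts its constants from these pointwise-over-$\bbR$ suprema, the numerical values it produces depend on $K$ alone and coincide with the intrinsic Schur-multiplier quantity $\|K\|_\gM$ defined via the integral representation \eqref{Z.4}---a functional of $K$ that makes no reference to any spectral measure. A final sanity check is that the constant $C(m_1,m_2)$ of Theorem~\ref{tZ.3} depends only on $m_1,m_2$, which is immediate since the auxiliary measure space $(\Omega,d\eta)=(\bbR,dt)$ used in Theorem~\ref{tZ.2} is Lebesgue measure on the Fourier dual and carries no dependence on the control measures $\rho_A,\rho_B$; assembling these observations yields the invariance asserted by the corollary.
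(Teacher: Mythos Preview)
Your first two paragraphs are exactly what the paper intends: you audit the proof of Proposition~\ref{pZ.4}, track the decomposition $K=h+k$ and the application of Theorem~\ref{tZ.3} with $(m_1,m_2)=(0,2)$, and observe that every constant that appears---$C(0,2)$, $C_0(h)$, and $\|k\|_{L^\infty}\le C_K$---is determined by $K$ alone. That is the paper's argument, and it is complete at this point.

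Your third paragraph, however, overreaches. The corollary (despite its slightly loose phrasing) is asserting a \emph{uniform bound} independent of $E_A,E_B$, not literal equality of the norms $\|\cJ_K^{A,B}\|$ across all choices of $A,B$. Indeed, literal equality is false: if $K$ happens to vanish on $\sigma(A)\times\sigma(B)$ but not on all of $\bbR^2$, then $\cJ_K^{A,B}=0$ for that particular pair while $\cJ_K^{A',B'}\neq 0$ for another. The paper only ever uses the corollary to obtain uniform bounds (see the proof of Theorem~\ref{tZ.10}, where it is invoked to get $\cJ_{G_{1,a_1}}^{A_n,B_n}\in\cB(\cB_p(\cH))$ ``uniformly for $n\in\bbN$''). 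Your attempt to reach literal independence also contains a confusion: you describe $\|K\|_\gM$ as ``a functional of $K$ that makes no reference to any spectral measure,'' but by definition (see \eqref{Z.4A}) $\|K\|_\gM=\|\cJ_K^{A,B}\|_{\cB(\cB_1(\cH))}$, which is precisely the $A,B$-dependent quantity in question; and the Cauchy--Schwarz step gives only the upper bound $C_\alpha C_\beta$ on $\|K\|_\gM$, not equality. Simply delete the third paragraph; the first two already constitute the proof.
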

\begin{proof}
This follows from the proof of Proposition \ref{pZ.4} and Theorem \ref{tZ.3}.
\end{proof}

To prove the norm bounds required for the proof of Proposition \ref{prop_continuity}, we now introduce the following assumption.

\begin{hypothesis} \lb{hyp} 
Assume that $A$ and $B$ are fixed self-adjoint operators in the Hilbert space $\cH$, $p\in[1,\infty)$, and there exists $m \in \bbN$, $m$ odd, such that for all $z\in\bbC\backslash\bbR$, 
\begin{equation}\lb{2.34zaz}
\big[(B - z I_{\cH})^{-m} - (A - z I_{\cH})^{-m}\big] \in \cB_p\big(\cH\big) \; (\text{resp., } \cB(\cH)). 
\end{equation} 
\end{hypothesis}

The following construction is taken from \cite{Ya05}. Fix a bijection $\varphi: \bbR \to \bbR$ satisfying for some $c >0$ and $r>0$,
\begin{equation}
\varphi \in C^2(\bbR), \quad \varphi(\lambda) = \lambda^{m}, \; |\lambda| \geq r, 
\quad \varphi'(\lambda) \geq c, \; \lambda\in\bbR.     \lb{W3.2}
\end{equation} 
Let $r>0$ be such that $\phi(\lambda)=\lambda^m$ for $|\lambda|\geq r$. 
We choose a function $\theta\in C^2(\bbR)$ such that  $\theta(\lambda)=0$ for $|\lambda|\leq r/2$, $\theta(\lambda)=1$ for $|\lambda|\geq r$ and 
\begin{align}\label{def_f,f_0}
\frac1{\phi(\lambda)-i}&=\theta(\lambda)\frac1{\lambda^m-i}+(1-\theta(\lambda))\frac1{\phi(\lambda)-i}=:g_1(\lambda)+g_2(\lambda),\quad \lambda\in\bbR.
\end{align}
We note that $g_2\in C^2(\bbR)$ with compact support.

Thus,
\begin{equation}\label{decomposition}
(\phi(A)-iI_{\cH})^{-1}-(\phi(B)-iI_{\cH})^{-1}=g_1(A)-g_1(B)+g_2(A)-g_2(B).
\end{equation}
Next, we denote 
\begin{equation}\label{def_K,K_0}
\begin{split}
G_{1,a}(\lambda,\mu)&=\frac{g_1(\lambda)-g_1(\mu)}{(\lambda-ia)^{-m}-(\mu-ia)^{-m}},\\ 
G_{2,a}(\lambda,\mu)&=\frac{g_2(\lambda)-g_2(\mu)}{(\lambda-ia)^{-m}-(\mu-ia)^{-m}},\quad \lambda,\mu\in\bbR,
\end{split}
\end{equation}
where $a\in\bbR\backslash\{0\}.$
In \cite[Proposition 3.3]{Ya05} it is proved that there exists a (sufficiently small) $a_1\in\bbR\backslash\{0\}$, such that the function $G_{1,a_1}$ satisfies the assumption of Proposition \ref{pZ.4}. Therefore, Proposition \ref{pZ.4} implies that 
\begin{equation}\label{DOI_g_1}
g_1(A)-g_1(B)=\cJ^{A,B}_{G_{1,a_1}}\big((A-a_1iI_{\cH})^{-m} - (B-a_1iI_{\cH})^{-m}\big)
\end{equation}
and 
\begin{equation}\label{estimate_lambda^m_part}
\|g_1(A)-g_1(B)\|_{\cB_p(\cH)}\leq C_1 \big\|(A-a_1iI_{\cH})^{-m} - (B-a_1iI_{\cH})^{-m}\big\|_{\cB_p(\cH)},
\end{equation}
for some constant $C_1=C_1(a_1,m) \in (0,\infty)$ (and a corresponding estimate for 
the $\cB(\cH)$-norm). Moreover, in \cite[Proposition 3.2]{Ya05} it is proved that there exists a (sufficiently large) $a_2\in\bbR\backslash\{0\}$, such that the function $G_{2,a_2}$ satisfies the assumption of Proposition \ref{pZ.4}. Therefore, 
\begin{equation}\label{DOI_g_2}
g_2(A)-g_2(B)=\cJ^{A,B}_{G_{2,a_2}}\big((A-a_2iI_{\cH})^{-m} - (B-a_2iI_{\cH})^{-m}\big)
\end{equation}
and 
\begin{equation}\label{estimate_compact_part}
\|g_2(A)-g_2(B)\|_{\cB_p(\cH)}\leq C_2 \big\|(A-a_2iI_{\cH})^{-m} - (B-a_2iI_{\cH})^{-m}\big\|_{\cB_p(\cH)}
\end{equation}
for some constant $C_2=C_2(a_2,m) \in (0,\infty)$ (and a corresponding estimate 
for the $\cB(\cH)$-norm). We note that the independence of the constants $C_1$ and $C_2$ in \eqref{estimate_lambda^m_part} and \eqref{estimate_compact_part} of $p \in (1,\infty)$ follows from 
the fact that $G_{1,a_1}, G_{2,a_2}\in\gM$ (see Proposition \ref{pZ.4}) and Remark \ref{rZ.1}. 
 
 Combining this with \eqref{decomposition} one arrives at the following result. If $\phi$ satisfies \eqref{W3.2}, then there exist $a_1,a_2\in\bbR\backslash\{0\}$ and $C = C(a_1,a_2,m) \in (0,\infty)$ such that
\begin{align}
&\big\|(\phi(A)-iI_{\cH})^{-1}-(\phi(B)-iI_{\cH})^{-1}\big\|_{\cB_p(\cH)}\no\\
&\quad \leq C\big(\big\|(A-a_1iI_{\cH})^{-m} - (B-a_1iI_{\cH})^{-m}\big\|_{\cB_p(\cH)} 
\lb{the_estimate}\\
&\hspace*{1.3cm}+\big\|(A-a_2iI_{\cH})^{-m} - (B-a_2iI_{\cH})^{-m}\big\|_{\cB_p(\cH)}\big),   \no
\end{align}
and an analogous estimate for the uniform norm $\|\, \cdot \, \|_{\cB(\cH)}$.

Next, we introduce the class of functions for which we prove the main results of this and the next sections.

\begin{definition}\cite{Ya05}\lb{dZ.9}
Let $m\in\bbN$.  Define the class of functions $\gF_m(\bbR)$ by
\begin{align}
& \gF_m(\bbR) := \big\{f \in C^2(\bbR) \, \big| \, f^{(\ell)} \in L^{\infty}(\bbR); \text{ there exists } 
\varepsilon >0 \text{ and } f_0 = f_0(f) \in \bbC    \no \\
& \quad  \text{such that } 
\big(d^{\ell}/d \lambda^{\ell}\big)\big[f(\lambda) - f_0 \lambda^{-m}\big] \underset{|\lambda|\to \infty}{=} 
\Oh\big(|\lambda|^{- \ell - m - \varepsilon}\big), \, \ell =0,1,2 \big\}.     \lb{Z.39}
\end{align} 
$($It is implied that $f_0 = f_0(f)$ is the same as $\lambda \to \pm \infty$.$)$
\end{definition}

In particular, one notes that for all $m \in \bbN$, 
\begin{equation} 
C_0^{\infty}(\bbR) \subset \gF_m(\bbR),   \lb{Z.40a} 
\end{equation}  
and
\begin{equation}\lb{Z.40}
f(\lambda) \underset{|\lambda| \to \infty}{=} f_0\lambda^{-m}+O  \big( |\lambda|^{-m - \epsilon} \big),\quad f\in\gF_m(\bbR).
\end{equation}

Let $f\in\mathfrak{F}_m(\bbR)$ and let $\phi$ be as before (see \eqref{W3.2}). The assumptions on the functions $\phi$ and $f$ imply that $f_0:=f\circ \phi^{-1}\in \mathfrak{F}_1(\bbR)$ (see \cite{Ya05}). It follows from the  discussion before \cite[Theorem~8.7.1]{Ya92} that there is a continuously differentiable function $g$ on $\bbT$, with  $g'$ satisfying the H\"older condition with exponent $\varepsilon>0$, such that 
\begin{equation}\label{def_g}
f_0(\lambda)=g(\gamma(\lambda)),
\end{equation}
where $\gamma(\lambda)=\frac{\lambda+i}{\lambda-i}$, $\lambda\in\bbR,$ denotes the Cayley transform.

We denote $U=\gamma(\phi(A))$, $V=\gamma(\phi(B)).$ By \eqref{the_estimate}, there exist $a_1,a_2 \in \bbR \backslash \{0\}$ and a constant $C=C(a_1,a_2,m) \in (0,\infty)$ such that 
\begin{align}\label{estimate_unitary}
\|U-V\|_{\cB_p(\cH)}&=\big\|2i(\phi(A)-iI_{\cH})^{-1}-(\phi(B)-iI_{\cH})^{-1}\big\|_{\cB_p(\cH)}\no\\
&\leq 2C\big(\big\|(A-a_1iI_{\cH})^{-m} - (B-a_1iI_{\cH})^{-m}\big\|_{\cB_p(\cH)}\\
&\hspace*{1cm}+\big\|(A-a_2iI_{\cH})^{-m} - (B-a_2iI_{\cH})^{-m}\big\|_{\cB_p(\cH)}\big),  \no
\end{align}
and an analogous estimate for the uniform norm $\|\, \cdot \, \|_{\cB(\cH)}$.

Since $g'$ satisfies the H\"older condition with exponent $\varepsilon>0$,  the double operator integral $\cJ^{U,V}_{g^{[1]}}$, where 
\begin{equation}
g^{[1]}(u,v)=\frac{g(u)-g(v)}{u-v},\quad u,v\in\bbT,
\end{equation}
is a bounded operator on $\cB_p(\cH)$,\, $p\in[1,\infty)$, and on $\cB(\cH)$ \cite[Theorem~11]{BS67}. Thus,  
\begin{align}
f(A)-f(B)=f_0(\phi(A))-f_0(\phi(B))=g(U)-g(V)=\cJ^{U,V}_{g^{[1]}}(U-V),
\end{align}
and therefore, $[f(A)-f(B)] \in \cB_p(\cH)$ (resp., $[f(A)-f(B)] \in \cB(\cH)$) and
\begin{align}
&\big\|f(A)-f(B)\|_{\cB_p(\cH)}\leq \|\mathcal{J}_{g^{[1]}}\|_{\cB(\cB_p(\cH))}\|U-V\|_{\cB_p(\cH)}\no\\
&\quad \leq C\big(\big\|(A-a_1iI_{\cH})^{-m} - (B-a_1iI_{\cH})^{-m}\big\|_{\cB_p(\cH)}\label{2.52a}\\
&\hspace*{1.2cm}+\big\|(A-a_2iI_{\cH})^{-m} - (B-a_2iI_{\cH})^{-m}\big\|_{\cB_p(\cH)}\big),\quad f\in \mathfrak{F}_m(\bbR)\no
\end{align}
(and the corresponding estimate for the uniform norm $\| \, \cdot \|_{\cB(\cH)}$). Here the 
constant $C=C(f,a_1,a_2,m) \in (0,\infty)$ is independent of $p \in (1,\infty)$ (see Remark \ref{rZ.1}).

\begin{remark}Assume Hypothesis \ref{hyp} with $p \in (1,\infty)$. Then estimate \eqref{2.52a} holds for a wider class of functions $f$, and the constant $C$ can be sharpened. Indeed, assume that function $f$ on $\bbR$ is such that the function $g$ on $\bbT$ defined by \eqref{def_g} is a Lipschitz function on $\bbT$. Then combining  \cite[Theorem~2]{ACS16} and \cite[Corollary~5.5]{CM-SPS14} one obtains $[f(A)-f(B)] \in \cB_p(\cH)$ and 
\begin{align}
& \|f(A)-f(B)\|_{\cB_p(\cH)} \leq 32\Big(C_1\frac{p^2}{p-1}+9\Big)\|U-V\|_{\cB_p(\cH)}   \no \\
&\quad \stackrel{\eqref{estimate_unitary}}{\leq} 64C_2\bigg(C_1\frac{p^2}{p-1}+9\bigg) 
\big(\|(A-a_1iI_{\cH})^{-m} - (B-a_1iI_{\cH})^{-m}\big\|_{\cB_p(\cH)}  \\
& \hspace*{4.5cm} + \big\|(A-a_2iI_{\cH})^{-m} - (B-a_2iI_{\cH})^{-m}\big\|_{\cB_p(\cH)}\big),   \no
\end{align}
where the constants $C_1=C_1(f) \in (0,\infty)$ and $C_2=C_2(a_1,a_2,m) \in (0,\infty)$ are independent of $p \in (1, \infty)$. 
\end{remark}

\begin{remark} \lb{rZ.11} 
In the special case $m = 1$, an inequality similar to \eqref{2.52a} was derived for $f \in \cA(\bbR)$ in \cite{GN15} using the notion of {\it almost analytic extensions}. Specifically, it was shown in \cite{GN15} that for each fixed $z_0\in \bbC\backslash \bbR$ and each $f\in \cA(\bbR)$, there exists a constant $C=C(f,z_0) \in (0,\infty)$, independent of $p \in [1,\infty)$, such that
\begin{equation}
\| f(A) - f(B)\|_{\cB_p(\cH)} \leq C \big\|(A - z_0 I_{\cH})^{-1} - (B - z_0 I_{\cH})^{-1}\big\|_{\cB_p(\cH)}, 
\quad p \in [1,\infty).
\end{equation}
Here $\cA(\bbR)$ is defined as 
\begin{equation}
\cA(\bbR) = \bigcup_{\beta < 0} S^{\beta}(\bbR), 
\end{equation} 
with the class $S^{\beta}(\bbR)$, $\beta \in \bbR$, consisting of all functions 
$f \in C^{\infty}(\bbR)$ such that 
\begin{equation}
f^{(m)}(x) \underset{|x|\to\infty}{=} \Oh\big(\langle x\rangle^{\beta - m}\big), \quad 
m \in \bbN_0,    \lb{2.1}
\end{equation}
where $\langle z \rangle = \big(|z|^2 + 1\big)^{1/2}$, $z\in \bbC$; in particular, 
$C_0^{\infty}(\bbR) \subset \cA(\bbR)$. For the case $\cB_p(\cH)$ replaced by $\cB(\cH)$, we 
refer to \cite[Theorem~2.6.2]{Da95}. The double operator integral (DOI) techniques employed 
in the bulk of this section not only yield the stronger estimate \eqref{2.52a} for $f \in \gF_m(\bbR)$, 
but at the same time permit the use of higher powers $m \in \bbN$ of resolvents to control 
the left-hand side of \eqref{2.52a}. \hfill $\diamond$
\end{remark}

\begin{remark} \lb{r2.4}
In connection with containments of the type in \eqref{2.34zaz}, we recall that a Cauchy-type formula implies the following elementary fact (cf.~\cite[p.~210]{Ya92})): Let $S_j$, $j\in \{1,2\}$, be 
self-adjoint operators in some complex, separable Hilbert space $\cK$. If 
\begin{equation} 
\big[(S_2 - zI_{\cK})^{-m} - (S_1 - zI_{\cK})^{-m}\big]\in \cB_p(\cK), 
\quad z\in \bbC\backslash\bbR, \lb{2.36}
\end{equation}
for some $p\in [1,\infty) \cup \{\infty\}$ and some $m\in \bbN$, then
\begin{equation} 
\big[(S_2 - zI_{\cK})^{-n} - (S_1 - zI_{\cK})^{-n}\big]\in \cB_p(\cK), 
\quad z\in \bbC\backslash\bbR,\; n\geq m.    \lb{2.37} 
\end{equation}
In the case where $S_j$, $j=1,2$, are bounded from below, see also 
\cite[Proposition~8.9.2]{Ya92}.
Hence, if \eqref{2.36} holds for some $m\in \bbN$, we may, without loss of generality, assume that 
$m$ is odd (as we will in subsequent sections).  \hfill $\diamond$
\end{remark}

\section{Limiting Process for Double Operator Integrals}  \lb{s3} 

The main purpose of this section is to prove Theorem \ref{tZ.10}.

Let $A_n,B_n, A,B$ be self-adjoint in the Hilbert space $\cH$. 
We recall the definition of the classes $\gA_r^s(E_A)$ and $\gA_l^s(E_B)$ (cf., e.g., \cite[p.~40]{BS73}).  Suppose $\phi(\, \cdot \,, \, \cdot \,)$ admits a representation of the form
\begin{equation}\lb{Z.26}
\phi(\lambda,\mu)=\int_\Omega \alpha(\lambda,t)\beta(\mu,t) \, d\eta(t),  \quad 
(\lambda,\mu) \in \bbR^2, 
\end{equation}
where $(\Omega,d\eta(t))$ is an auxiliary measure space and 
\begin{equation}\lb{Z.27}
C_{\alpha}^2 :=\sup_{\lambda \in \bbR}\int_\Omega|\alpha(\lambda,t)|^2 \, d\eta(t)<\infty,   \quad 
C_{\beta}^2 :=\sup_{\mu\in\bbR}\int_\Omega|\beta(\mu,t)|^2 \, d\eta(t)<\infty.
\end{equation}
Set 
\begin{align}\lb{Z.28}
\begin{split}
a(t) &:= \int_\bbR\alpha(\lambda,t) \, dE_A(\lambda), \quad b(t) := \int_\bbR\beta(\mu,t) \, dE_B(\mu), 
\\ 
a_n(t) &:= \int_\bbR\alpha(\lambda,t) \, dE_{A_n}(\lambda), \quad b_n(t) := \int_\bbR\beta(\mu,t) \, dE_{B_n}(\mu),\quad n\in\bbN, 
\end{split}
\end{align}
and introduce
\begin{equation}\lb{Z.29}
\begin{split}
\varepsilon_n(v,\alpha)&=\bigg[\int_\Omega \|a_n(t) v - a(t) v\|^2 \, d\eta(t)\bigg]^{1/2}, \\
\delta_n(v,\beta)&=\bigg[\int_\Omega \|b_n(t) v - b(t) v\|^2 \, d\eta(t)\bigg]^{1/2},\quad 
n\in\bbN,\; v \in \cH,
\end{split}
\end{equation}
and 
\begin{align}
& \gA_r^s(E_A) := \big\{\phi \, \text{in} \, \eqref{Z.26} \, \big| \, \lim_{n\to\infty}\varepsilon_n(v,\alpha)=0,\, 
v \in \cH\big\},   \lb{Z.30} \\
& \gA_l^s(E_B) := \big\{\phi \, \text{in} \, \eqref{Z.26} \, \big| \, \lim_{n\to\infty}\delta_n(v,\alpha)=0,\, 
v \in \cH\big\}. 
\lb{Z.31} 
\end{align}
 If $A_n,B_n, A,B$ are unitary operators on $\cH$, the classes 
$\gA_r^s(E_A), \gA_l^s(E_A)$ are introduced similarly.

We note that the definitions of the classes $\gA_r^s(E_A), \gA_l^s(E_A)$ impose certain restrictions on convergences $A_n \longrightarrow A$ and $B_n \longrightarrow B$ as well as on the properties of the function $\phi$, given in \eqref{Z.26}.

\begin{proposition}\label{fA_linear}
If $\phi,\psi \in \gA_r^s(E_A)$ $($respectively, $\phi,\psi \in \gA_l^s(E_B)$$)$, then $(\phi+\psi) \in \gA_r^s(E_A)$ $($respectively, $(\phi+\psi) \in \gA_l^s(E_B)$$)$. 
\end{proposition}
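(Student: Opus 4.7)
The plan is to verify closure of $\gA_r^s(E_A)$ under addition by constructing a joint representation of $\phi+\psi$ on a disjoint-union measure space; the statement for $\gA_l^s(E_B)$ will then follow by the symmetric argument with the roles of $\alpha,A$ and $\beta,B$ interchanged. Suppose $\phi,\psi \in \gA_r^s(E_A)$ admit representations of the form \eqref{Z.26},
\begin{equation*}
\phi(\lambda,\mu) = \int_{\Omega_1} \alpha_1(\lambda,t) \beta_1(\mu,t) \, d\eta_1(t), \quad \psi(\lambda,\mu) = \int_{\Omega_2} \alpha_2(\lambda,t) \beta_2(\mu,t) \, d\eta_2(t),
\end{equation*}
with both pairs $(\alpha_j,\beta_j)$, $j=1,2$, satisfying \eqref{Z.27} and with the associated convergences $\varepsilon_n^{(j)}(v,\alpha_j) \to 0$ as $n\to\infty$ for every $v \in \cH$.

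The first step is to form the disjoint-union measure space $(\Omega,d\eta) := (\Omega_1\sqcup\Omega_2,\, d\eta_1 \oplus d\eta_2)$ and define $\alpha,\beta$ on $\Omega$ piecewise by $\alpha(\lambda,t) := \alpha_j(\lambda,t)$ and $\beta(\mu,t) := \beta_j(\mu,t)$ for $t\in \Omega_j$, $j=1,2$. Splitting integrals over $\Omega$ along this partition immediately yields
\begin{equation*}
(\phi+\psi)(\lambda,\mu)=\int_\Omega \alpha(\lambda,t)\beta(\mu,t)\, d\eta(t),
\end{equation*}
together with the bounds $C_\alpha^2 \leq C_{\alpha_1}^2 + C_{\alpha_2}^2 < \infty$ and $C_\beta^2 \leq C_{\beta_1}^2 + C_{\beta_2}^2 < \infty$, so that the new representation of $\phi+\psi$ meets the conditions in \eqref{Z.26}--\eqref{Z.27}.

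The second step is to verify the strong convergence condition \eqref{Z.30}. Since $\alpha$ is piecewise, the spectral integrals defined in \eqref{Z.28} satisfy $a(t)=a_j(t)$ and $a_n(t)=a_n^{(j)}(t)$ for $t\in\Omega_j$, where $a_j,a_n^{(j)}$ denote the operators attached to the representation of $\phi$ ($j=1$) and $\psi$ ($j=2$). Splitting the integral in \eqref{Z.29} along $\Omega = \Omega_1 \sqcup \Omega_2$ produces
\begin{equation*}
\varepsilon_n(v,\alpha)^2 = \varepsilon_n^{(1)}(v,\alpha_1)^2 + \varepsilon_n^{(2)}(v,\alpha_2)^2,
\end{equation*}
which tends to $0$ as $n\to\infty$ by hypothesis for each $v\in\cH$. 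This establishes $(\phi+\psi) \in \gA_r^s(E_A)$, and the proof for $\gA_l^s(E_B)$ is identical with \eqref{Z.31} in place of \eqref{Z.30}. No serious obstacle is anticipated: the argument is essentially a bookkeeping exercise in the definitions, the only mild point being the compatibility of the piecewise construction with the $L^2$-type sums in \eqref{Z.29}, which is immediate from $\sigma$-additivity of the measure $\eta$ over the partition.
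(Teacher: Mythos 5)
Your proof is correct and follows essentially the same route as the paper's: form the disjoint-union (direct sum) measure space $\Omega_1\sqcup\Omega_2$, define $\alpha$ piecewise, and split the integral in \eqref{Z.29} to reduce $\varepsilon_n(v,\alpha)$ to the two separate convergences. Your identity $\varepsilon_n(v,\alpha)^2=\varepsilon_n^{(1)}(v,\alpha_1)^2+\varepsilon_n^{(2)}(v,\alpha_2)^2$ is in fact a cleaner statement of the estimate the paper writes down.
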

\begin{proof}We prove the assertion only for the set $\gA_r^s(E_A)$, since for the set $\gA_l^s(E_B)$ the proof is similar. 

Let the functions $\phi$ and $\psi$ have the representations
\begin{equation}
\phi(\lambda,\mu)=\int_{\Omega_1} \alpha_1(\lambda,t)\beta_1(\mu,t)\,d\eta_1(t), \quad 
\psi(\lambda,\mu)=\int_{\Omega_2} \alpha_2(\lambda,t)\beta_2(\mu,t)\,d\eta_2(t),
\end{equation}
for some measure spaces $(\Omega_i,d\eta_j(t)), $ and functions $\alpha_j,\beta_j$, $j\in \{1,2\}$.

Let $(\Omega,\Sigma ,d\eta(t)) $ be the direct sum
of the measure spaces $(\Omega_1,d\eta_1(t)) $ and \, $(\Omega_2,d\eta_2(t))$ (so $\Omega=\Omega_{1}\sqcup \Omega_{2}$, the
disjoint union of $\Omega_{1}$ and $\Omega_{2}$, etc.). Define the function 
\begin{equation}
\alpha \left( \lambda ,t\right) =
\begin{cases}
\alpha _{1}\left( \lambda ,t\right), & t\in \Omega_{1}, \\ 
\alpha _{2}\left( \lambda ,t\right), & t\in \Omega_{2}.
\end{cases}
\end{equation}
Evidently, the function $\alpha$ satisfies condition \eqref{Z.27}.   In addition,
\begin{align}
a_n(t)=
\begin{cases}
\displaystyle \int_\bbR\alpha _{1}\left( \lambda ,t\right)dE_{A_n}(t) = a_n^{(1)}(t), & t\in \Omega_{1}, \\[3mm]
\displaystyle \int_\bbR\alpha _{2}\left( \lambda ,t\right)dE_{A_n}(t) = a_n^{(2)}(t), & t\in \Omega_{2},
\end{cases}
\end{align}
and 
\begin{equation}
a(t)=
\begin{cases}
\displaystyle \int_\bbR\alpha _{1}\left( \lambda ,t\right)dE_A(t) = a^{(1)}(t), & t\in \Omega_{1}, \\[3mm]
\displaystyle \int_\bbR\alpha _{2}\left( \lambda ,t\right)dE_A(t) = a^{(2)}(t), & t\in \Omega_{2},
\end{cases}
\end{equation}
where  $a_n^{(j)}(\cdot)$ and $a^{(j)}(\cdot)$ denote the operators defined by  \eqref{Z.28} with respect to the functions $\alpha_j$, $j\in \{1,2\}$.  Hence, for every fixed $v \in \cH$, 
\begin{align}
\varepsilon_n(v,\alpha)&=\bigg(\int_\Omega\|a_n(t) v - a(t) v\|^2d\eta(t)\bigg)^2\no\\
&\leq \bigg(\int_{\Omega_1}\big\|a_n^{(1)}(t) v - a^{(1)}(t) v\big\|^2d\eta_1(t)\bigg)^2  \no \\
& \quad + \bigg(\int_{\Omega_2}\big\|a_n^{(2)}(t) v - a^{(2)}(t) v\big\|^2d\eta_2(t)\bigg)^2\no\\
&=\varepsilon_n(v,\alpha_1)+\varepsilon_n(v,\alpha_2)\underset{n \to \infty}{\longrightarrow} 0.  
\end{align}
Thus, $(\phi+\psi)\in\gA_r^s(E_A)$.
\end{proof}

Our proof of Theorem \ref{tZ.10} is based on the following result in \cite{BS73}.

\begin{proposition}\cite[Proposition 5.6]{BS73}\lb{pZ.7}
Let $\phi\in \gA_r^s(E_A)\cap\gA_l^s(E_B)$. Then for any  
$T\in\cB_p(\cH)$, $p \in [1,\infty)$, 
\begin{align}
& \lim_{n \to \infty}\big\|\cJ_\phi^{A_n,B_n}(T)-\cJ_\phi^{A,B}(T)\big\|_{\cB_p(\cH)} = 0, 
\quad p \in [1,\infty).    \lb{Z.32} 
\end{align} 
\end{proposition}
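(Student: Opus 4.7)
The plan is to exploit the factorization $\cJ^{A,B}_\phi(T) = \int_\Omega a(t)\,T\,b(t)\,d\eta(t)$ (and the analogue with $a_n$, $b_n$) arising from \eqref{Z.26}, together with the uniform operator bound
$\|\cJ^{A_n,B_n}_\phi\|_{\cB(\cB_p(\cH))}, \|\cJ^{A,B}_\phi\|_{\cB(\cB_p(\cH))}\leq C_\alpha C_\beta$,
valid on every $\cB_p(\cH)$, $p\in[1,\infty)$, by Theorem \ref{tZ.2} and Remark \ref{rZ.1}. Crucially, this bound is independent of $n$ and of the spectral measures. A standard $3\varepsilon$-argument based on the density of finite-rank operators in $\cB_p(\cH)$, combined with the linearity granted by Proposition \ref{fA_linear}, reduces the problem to the case of a single rank-one operator $T = (u,\,\cdot\,)v$.

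For such $T$, I would split the difference into the telescoping expression
\begin{equation*}
\cJ^{A_n,B_n}_\phi(T) - \cJ^{A,B}_\phi(T) = \int_\Omega [a_n(t)-a(t)]\,T\,b_n(t)\,d\eta(t) + \int_\Omega a(t)\,T\,[b_n(t)-b(t)]\,d\eta(t),
\end{equation*}
so that only one of the operators $a, b$ changes in each summand. The integrand of the first term is the rank-one operator $(b_n(t)^*u,\,\cdot\,)\,[a_n(t)-a(t)]v$, whose $\cB_p(\cH)$-norm equals $\|b_n(t)^*u\|\cdot\|[a_n(t)-a(t)]v\|$. Pulling the norm inside the Bochner integral and applying Cauchy--Schwarz gives the bound $\varepsilon_n(v,\alpha)\bigl(\int_\Omega\|b_n(t)^*u\|^2\,d\eta(t)\bigr)^{1/2}$; by Fubini and \eqref{Z.27} the latter integral is dominated by $C_\beta^2\|u\|^2$, so the first piece tends to zero by the hypothesis $\phi\in\gA_r^s(E_A)$. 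A symmetric computation, now invoking $\phi\in\gA_l^s(E_B)$, handles the second summand.

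The main (and only mildly technical) obstacle is the rigorous interpretation of $\cJ^{A,B}_\phi(T)$ as an operator-valued Bochner integral in $\cB_p(\cH)$ rather than a merely weakly-defined DOI, and the attendant justification of the interchange of $\|\cdot\|_{\cB_p(\cH)}$ with $\int_\Omega\cdot\,d\eta(t)$. For rank-one $T$, however, the integrand $a(t)\,T\,b(t)$ is strongly measurable and absolutely integrable in $\cB_p(\cH)$ by virtue of \eqref{Z.27}, which places this step squarely within the standard DOI framework of \cite{BS03}. Once the rank-one case is settled, the uniform bound $\|\cJ^{A_n,B_n}_\phi-\cJ^{A,B}_\phi\|_{\cB(\cB_p(\cH))}\leq 2 C_\alpha C_\beta$ together with the density of finite-rank operators in $\cB_p(\cH)$ closes the $3\varepsilon$-argument and yields \eqref{Z.32}.
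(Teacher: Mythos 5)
The paper itself supplies no proof of this proposition: it is imported as \cite[Proposition~5.6]{BS73} (``Our proof of Theorem~\ref{tZ.10} is based on the following result in \cite{BS73}''), so there is no in-paper argument to compare yours against. Your reconstruction follows the standard Birman--Solomyak route: uniform boundedness of the transformers in $n$ (Theorem~\ref{tZ.2} plus Remark~\ref{rZ.1}, the bound $C_\alpha C_\beta$ being independent of the spectral measures), reduction to rank-one $T$ by density of finite-rank operators in $\cB_p(\cH)$ and a $3\varepsilon$-argument (for which ordinary linearity of $T\mapsto \cJ^{A,B}_\phi(T)$ suffices --- Proposition~\ref{fA_linear} concerns sums of symbols, not of operators $T$, so its invocation here is misplaced but harmless), and the telescoping split. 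The first summand is handled correctly: the Fubini/Cauchy--Schwarz bound $\varepsilon_n(v,\alpha)\,C_\beta\|u\|$ is exactly right, since $\|b_n(t)^*u\|=\|b_n(t)u\|$ because $b_n(t)=\beta(B_n,t)$ is normal.

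The genuine gap is the assertion that ``a symmetric computation \dots handles the second summand.'' It is not symmetric. For $T=(u,\,\cdot\,)v$ the integrand $a(t)\,T\,[b_n(t)-b(t)]$ is a rank-one operator with range spanned by $a(t)v$ and with $\cB_p(\cH)$-norm $\|a(t)v\|\cdot\big\|[b_n(t)-b(t)]^*u\big\|$: the perturbed factor $b_n(t)-b(t)$ unavoidably lands on the adjoint (``input'') side because $b$ multiplies $T$ from the right. Cauchy--Schwarz therefore yields the bound $C_\alpha\|v\|\,\bigl(\int_\Omega\|[b_n(t)-b(t)]^*u\|^2\,d\eta(t)\bigr)^{1/2}$, whereas the hypothesis $\phi\in\gA_l^s(E_B)$, with $\delta_n$ as defined in \eqref{Z.29}, controls $\int_\Omega\|[b_n(t)-b(t)]u\|^2\,d\eta(t)$. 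These are not interchangeable: $b_n(t)-b(t)$ is a difference of normal operators built from \emph{different} spectral measures, hence need not be normal when $\beta$ is complex-valued, so $\|[b_n(t)-b(t)]^*u\|\neq\|[b_n(t)-b(t)]u\|$ in general, and the integrated condition does not trivially transfer to adjoints (strong convergence does not imply $*$-strong convergence without extra structure). To close the argument you must either use the adjoint-aware form of the ``left'' class --- i.e.\ require $\delta_n(u,\overline{\beta})\to 0$, which is how the class is meant in \cite{BS73} and is what is actually verified in the applications of this paper (where the relevant $\alpha(\lambda,t)=e^{i\lambda t}(|t|^{m_1}+|t|^{m_2})^{-1/2}$ satisfies $\overline{\alpha(\lambda,t)}=\alpha(\lambda,-t)$, so the same estimate covers the conjugate) --- or insert an argument passing from $\beta$ to $\overline{\beta}$. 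As written, the estimate for the second summand does not follow from the stated hypothesis.
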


In order to formulate the main results of this section later on, we introduce the following assumption.

\begin{hypothesis} \lb{hZ.7}
Let $A$, $B$, $A_n$, $B_n$, $n \in \bbN$, be 
self-adjoint operators in a separable Hilbert space $\mathcal H$ and suppose that
\begin{equation}
\slim_{n \to \infty} (A_n - z_0 I_{\cH})^{-1} = (A - z_0 I_{\cH})^{-1}, \quad 
\slim_{n \to \infty} (B_n - z_0 I_{\cH})^{-1} = (B - z_0 I_{\cH})^{-1},      \lb{2.36A} 
\end{equation}
for some $z_0 \in \bbC \backslash \bbR$ $($cf.\ \cite[Theorem~VIII.19\,(b)]{RS80}$)$.
\end{hypothesis}

\begin{lemma}\lb{lZ.8}
Assume Hypothesis \ref{hZ.7}. If a function $\phi(\cdot, \cdot)$ satisfies the condition of Theorem~\ref{tZ.3}, then 
$\phi \in \gA_r^s (E_A)$. 
\end{lemma}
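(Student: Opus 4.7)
The plan is to exhibit a specific representation of $\phi$ of the form \eqref{Z.26} that meets the defining condition of $\gA_r^s(E_A)$. The natural candidate is the one constructed within the proof of Theorem~\ref{tZ.3}: take $(\Omega, d\eta(t)) = (\bbR, dt)$ together with
\begin{equation*}
\alpha(\lambda, t) = e^{i\lambda t} \bigl(|t|^{m_1}+|t|^{m_2}\bigr)^{-1/2}, \quad \beta(\mu, t) = \bigl(|t|^{m_1}+|t|^{m_2}\bigr)^{1/2} \hatt\phi(t, \mu),
\end{equation*}
as in \eqref{Z.12}. The hypothesis of Theorem~\ref{tZ.3} together with \eqref{Z.9} guarantees that these satisfy \eqref{Z.27}, so this is a legitimate representation of $\phi$.

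With this choice, the operators defined by \eqref{Z.28} reduce to scalar multiples of unitary groups,
\begin{equation*}
a(t) = \bigl(|t|^{m_1}+|t|^{m_2}\bigr)^{-1/2} e^{itA}, \quad a_n(t) = \bigl(|t|^{m_1}+|t|^{m_2}\bigr)^{-1/2} e^{itA_n},
\end{equation*}
so that
\begin{equation*}
\|a_n(t) v - a(t) v\|^2 = \bigl(|t|^{m_1}+|t|^{m_2}\bigr)^{-1} \bigl\|e^{itA_n} v - e^{itA} v\bigr\|^2.
\end{equation*}
Next I would invoke the Trotter--Kato theorem: the strong resolvent convergence $A_n \to A$ assumed in Hypothesis~\ref{hZ.7} implies that $e^{itA_n} v \to e^{itA} v$ strongly for every $v \in \cH$ and every $t \in \bbR$ (indeed, uniformly on compact $t$-intervals). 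Therefore the integrand defining $\varepsilon_n(v,\alpha)^2$ tends to $0$ pointwise in $t$. Combining this with the uniform-in-$n$ domination $\|a_n(t) v - a(t) v\|^2 \leq 4\|v\|^2 \bigl(|t|^{m_1}+|t|^{m_2}\bigr)^{-1}$, whose right-hand side is integrable over $\bbR$ by \eqref{Z.9} (since $0 \leq m_1 < 1 < m_2$), Lebesgue's dominated convergence theorem yields $\varepsilon_n(v,\alpha) \to 0$ for every $v \in \cH$, which is precisely the definition of $\phi \in \gA_r^s(E_A)$.

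The argument is essentially routine and I do not foresee a genuine obstacle. The one point demanding care is the choice of representation: an arbitrary representation merely satisfying \eqref{Z.27} would not suffice in general. The argument hinges on the fact that, in the representation lifted from the proof of Theorem~\ref{tZ.3}, the parameter $t$ is the Fourier-dual variable, so $a(t)$ reduces up to a scalar weight to the unitary group $e^{itA}$, and it is precisely for unitary groups that strong resolvent convergence translates into pointwise-in-$t$ strong convergence via Trotter--Kato, enabling the dominated-convergence argument.
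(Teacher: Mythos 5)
Your proof is correct and follows essentially the same route as the paper: the same representation $(\Omega,d\eta)=(\bbR,dt)$ with $\alpha(\lambda,t)=e^{i\lambda t}(|t|^{m_1}+|t|^{m_2})^{-1/2}$ lifted from the proof of Theorem~\ref{tZ.3}, the same identification of $a_n(t),a(t)$ as weighted unitary groups, and the same appeal to the Trotter--Kato theorem together with the integrability of $(|t|^{m_1}+|t|^{m_2})^{-1}$ from \eqref{Z.9}. The only (immaterial) difference is that you close the argument by dominated convergence, whereas the paper splits the $t$-integral into $|t|\leq R$ (uniform convergence on compacts) and $|t|>R$ (smallness of the tail); both are valid.
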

\begin{proof}
This argument is based on the proof of Theorem~\ref{tZ.3}.
Let $(\Omega,d\eta(t))=(\bbR,  dt) $  and let $ \alpha(\lambda,
t) = e^{i \lambda t}\big(|t|^{m_1}+|t|^{m_2}\big)^{-1/2}$. If $v \in \cH$, then 
\begin{equation}\lb{Z.33}
 \varepsilon_n(v, \alpha) =\bigg[ \int_{\bbR} \Big(|t|^{m_1}+|t|^{m_2}\Big)^{-1} \big\| e^{itA_n}  
v - e^{itA} v  
\big\|_{\cH}^2 \, dt \bigg]^{\frac 12},\quad n\in \bbN.
\end{equation}

Fix $\delta > 0$.  Since $\int_\bbR\big(|t|^{m_1}+|t|^{m_2}\big)^{-1} \, dt < \infty$ 
(cf., eq.~\eqref{Z.9}), there exists $ R > 0 $ such that 
\begin{equation}\lb{Z.34}
\int_{|t|>R}\big(|t|^{m_1}+|t|^{m_2}\big)^{-1} \, dt<\delta.
\end{equation}
On the other hand, since the family of functions $\big\{ e^{i\lambda t} \big\}_{t \in [-R,R]}$ 
is uniformly continuous, \cite[Theorem~VIII.21]{RS80} and the comment following its proof 
guarantees for each $v \in \cH$, 
\begin{equation}\lb{Z.36}
\lim_{n \rightarrow 0} \, \big\|e^{itA_n} v - e^{itA} v \big\|_{\cH} = 0,
\end{equation}
uniformly in $t \in [-R, R]$.  Therefore, for each $v \in \cH$, there exists $N\in \bbN$ such that
\begin{equation}\lb{Z.39a}
\big\|e^{itA_n} v - e^{itA} v \big\|_{\cH}<\delta,\quad n\geq N, \;  t\in [-R,R].
\end{equation}
Hence, for every $v \in \cH$, 
\begin{align}
\lim_{n \rightarrow \infty}\varepsilon_n(v, \alpha) & \leq \lim_{n
\rightarrow \infty} \bigg[ \int_{\left| t \right| \leq R} \big\|
e^{itA_n} v - e^{itA} v \big\|_{\cH}^2 \, d t
\bigg]^{\frac 12} \lb{Z.37}\\ 
&\quad+ \lim_{n \rightarrow \infty} \bigg[
\int_{\left| t \right| > R} \big\| e^{itA_n} v - e^{itA} v 
\big\|_{\cH}^2 \, d t \bigg]^{\frac 12} \no \\
& \leq 2 \delta \| v \|_{\cH}.\no 
\end{align}
Since $\delta > 0$ was arbitrary, one concludes 
\begin{equation}\lb{Z.38}
\lim_{n \rightarrow\infty} \varepsilon_n(v, \alpha) = 0,\quad v \in \mathcal H.
\end{equation}
\end{proof}

The next corollary is an immediate consequence of Lemma \ref{lZ.8} and Proposition \ref{pZ.4}.

\begin{corollary}\label{in_A_r}Assume Hypothesis \ref{hZ.7}.~If a function $K$ on $\bbR^2$ 
satisfies the assumption of Proposition \ref{pZ.4}, then $K\in \gA_r^s(E_B)$.
\end{corollary}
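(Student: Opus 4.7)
The plan is to reuse the decomposition
\begin{equation*}
K(\lambda,\mu) = h(\lambda,\mu) + k(\mu), \qquad h(\lambda,\mu) := K(\lambda,\mu) - k(\mu),
\end{equation*}
introduced inside the proof of Proposition~\ref{pZ.4}, where $k(\mu) := \lim_{\lambda\to\pm\infty} K(\lambda,\mu)$ is a bounded function on $\bbR$ by \eqref{Z.13} and \eqref{Z.15}, and $h$ was shown there to satisfy the hypotheses of Theorem~\ref{tZ.3} with $m_1 = 0$ and $m_2 = 2$. Since $\gA_r^s(E_B)$ is linear in $K$ by Proposition~\ref{fA_linear}, it suffices to establish membership for $h$ and for $k$ separately.

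The first step is to invoke Lemma~\ref{lZ.8} applied to $h$: since $h$ meets the Theorem~\ref{tZ.3} condition and Hypothesis~\ref{hZ.7} supplies strong resolvent convergence of both $A_n \to A$ and $B_n \to B$, the argument in the proof of Lemma~\ref{lZ.8} applies (with the roles of $A$ and $B$ fully interchangeable, as one sees by inspecting that proof), giving $h \in \gA_r^s(E_B)$. The second step is to handle the residual $k$, which depends on $\mu$ alone: I would insert it into the framework of \eqref{Z.26} via the trivial single-point representation $\Omega = \{t_0\}$ of unit mass, with $\alpha(\lambda,t_0) \equiv 1$ and $\beta(\mu,t_0) := k(\mu)$. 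For this representation $a_n(t_0) = I_{\cH} = a(t_0)$ for every $n \in \bbN$, so $\varepsilon_n(v,\alpha) \equiv 0$ and $k \in \gA_r^s(E_B)$ vacuously.

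Finally, combining the two summands via Proposition~\ref{fA_linear} yields $K = h + k \in \gA_r^s(E_B)$. I do not expect any real obstacle here; the corollary essentially just repackages the decomposition already carried out inside the proof of Proposition~\ref{pZ.4}, together with Lemma~\ref{lZ.8} and the strong resolvent convergence built into Hypothesis~\ref{hZ.7}. The mildest point of care is to confirm that Lemma~\ref{lZ.8}, whose statement is phrased for $\gA_r^s(E_A)$, transfers to $\gA_r^s(E_B)$; this is immediate because its proof uses only the strong convergence of the unitary groups $\{e^{itA_n}\}_{n\in\bbN}$ coming from strong resolvent convergence via \cite[Theorem~VIII.21]{RS80}, and the same conclusion holds for $\{e^{itB_n}\}_{n\in\bbN}$ under Hypothesis~\ref{hZ.7}.
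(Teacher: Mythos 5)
Your proof is correct and is essentially identical to the paper's own argument: the same decomposition $K=h+k$ from the proof of Proposition~\ref{pZ.4}, Lemma~\ref{lZ.8} applied to $h$, a trivial point-mass representation with $\alpha\equiv 1$ for the $\mu$-dependent remainder $k$, and Proposition~\ref{fA_linear} to combine. The only divergence is that you take the statement's ``$\gA_r^s(E_B)$'' literally and justify swapping $A$ for $B$ in Lemma~\ref{lZ.8}, whereas the paper's own proof (and its subsequent use in Theorem~\ref{tZ.10}) concludes $K\in\gA_r^s(E_A)$, the ``$E_B$'' in the corollary's statement being a typo; your extra remark is harmless and in fact shows the argument works for either reading.
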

\begin{proof}
As in the proof of Proposition \ref{pZ.4} (see \eqref{Z.16} and \eqref{Z.17}), we set 
\begin{equation}
k(\mu)=\lim_{\lambda\to\pm \infty}K(\lambda,\mu),\quad h(\lambda,\mu)=K(\lambda,\mu)-k(\mu),\quad \lambda,\mu\in\bbR,
\end{equation}
and write
\begin{equation}\label{K_via_h}
K(\lambda,\mu)=h(\lambda,\mu)-k(\mu).
\end{equation}
As established in the course of the proof of Proposition \ref{pZ.4}, the function $h$ satisfies the assumption of Theorem \ref{tZ.3}.
Therefore, by Lemma \ref{lZ.8} we have $h\in \gA_r^s(E_A)$. In addition, for the function $\phi(\lambda,\mu):=k(\mu)$ we can write
\begin{equation}
\phi(\lambda,\mu)=\int_\bbR \alpha(\lambda,t)\beta(\mu,t)\, dm(t),
\end{equation}
where $\alpha(\lambda,t)=1$, $\beta(\mu,t)=k(\mu)$, and $m$ is the measure defined on the $\sigma$-algebra $2^\bbR$ by setting 
\begin{equation}
m(A)=\begin{cases} 1,& 0\in A,\\ 
0,&\text{ otherwise}.
\end{cases}
\end{equation}
Since for the function $\alpha(\lambda,t)=1$, the corresponding operators $a(t)$ and $a_n(t)$, defined in \eqref{Z.28} are just the identity operator, it is clear that the function $\phi$ belongs to the class $\gA_r^s(E_A).$ Hence, equality \eqref{K_via_h} combined with Proposition \ref{fA_linear} implies that $K\in\gA_r^s(E_A)$.  
\end{proof}

To proceed further, we now strengthen the assumptions on the operators $A_n,A$ and $B_n,B$, 
$n\in\bbN$, as follows.
\begin{hypothesis}
\label{hZ.7.1}
In addition to Hypothesis \ref{hZ.7} we assume that for some $m \in \bbN$, $m$ odd, $p \in [1,\infty)$,  and every $a\in\bbR \backslash \{0\}$, 
\begin{align} 
\begin{split} 
T(a) &:= \big[( A + iaI_{\cH})^{-m}
- ( B + iaI_{\cH})^{-m}\big] \in \cB_p(\cH),   \\  
T_n(a) &:= \big[( A_n + iaI_{\cH})^{-m} - ( B_n + iaI_{\cH})^{-m}\big] \in\cB_p(\cH),   \lb{Z.41} 
\end{split} 
\end{align}
and 
\begin{equation}
\lim_{n \rightarrow \infty} \|T_n(a) - T(a)\|_{\cB_p(\cH)} =0. 
    \lb{Z.42}
\end{equation} 
\end{hypothesis}

With this hypothesis in hand, the following theorem is the main result of this section.

\begin{theorem}\lb{tZ.10}
Assume Hypothesis \ref{hZ.7.1}.~Then for any function $ f \in \mathfrak F_m(\bbR)$,  
\begin{equation}
\lim_{n \rightarrow \infty} \big\| [f(A_n) - f(B_n)] - [f(A)
- f(B)]\big\|_{\cB_p(\cH)}=0.   \lb{Z.43}
\end{equation} 
\end{theorem}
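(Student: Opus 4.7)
The plan is to exploit the explicit double-operator-integral representation of $f(A)-f(B)$ constructed in Section \ref{s2} and then pass to the limit through two nested layers of DOIs. With $\phi$, $g_j$, $G_{j,a_j}$, $g$, and $U=\gamma(\phi(A))$, $V=\gamma(\phi(B))$ as there, the discussion surrounding \eqref{decomposition} and \eqref{DOI_g_1}--\eqref{DOI_g_2} yields
\begin{equation*}
f(A)-f(B) = 2i\,\cJ^{U,V}_{g^{[1]}}\!\bigl(\cJ^{A,B}_{G_{1,a_1}}(T(-a_1))+\cJ^{A,B}_{G_{2,a_2}}(T(-a_2))\bigr),
\end{equation*}
together with the completely analogous identity for $(A_n,B_n)$, with $U_n=\gamma(\phi(A_n))$, $V_n=\gamma(\phi(B_n))$ and $T_n(-a_j)$ in place of $T(-a_j)$.

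For the inner layer, set $W_n^{(j)}:=\cJ^{A_n,B_n}_{G_{j,a_j}}(T_n(-a_j))$, $W^{(j)}:=\cJ^{A,B}_{G_{j,a_j}}(T(-a_j))$, and split
\begin{equation*}
W_n^{(j)}-W^{(j)} = \cJ^{A_n,B_n}_{G_{j,a_j}}\!\bigl(T_n(-a_j)-T(-a_j)\bigr)+\bigl(\cJ^{A_n,B_n}_{G_{j,a_j}}-\cJ^{A,B}_{G_{j,a_j}}\bigr)(T(-a_j)).
\end{equation*}
The first summand tends to zero in $\cB_p(\cH)$ by the uniform-in-$n$ operator-norm bound of Corollary \ref{cZ.5} together with Hypothesis \ref{hZ.7.1}. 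The second summand is handled by Proposition \ref{pZ.7}: Corollary \ref{in_A_r} places $G_{j,a_j}\in\gA_r^s(E_A)$, and the symmetric argument with the two variables interchanged yields $G_{j,a_j}\in\gA_l^s(E_B)$. Hence $W_n:=W_n^{(1)}+W_n^{(2)}\to W:=W^{(1)}+W^{(2)}$ in $\cB_p(\cH)$.

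For the outer layer, decompose
\begin{equation*}
\cJ^{U_n,V_n}_{g^{[1]}}(W_n)-\cJ^{U,V}_{g^{[1]}}(W) = \cJ^{U_n,V_n}_{g^{[1]}}(W_n-W)+\bigl(\cJ^{U_n,V_n}_{g^{[1]}}-\cJ^{U,V}_{g^{[1]}}\bigr)(W).
\end{equation*}
The first summand vanishes in the limit, because $\|\cJ^{U_n,V_n}_{g^{[1]}}\|_{\cB(\cB_p(\cH))}$ is bounded uniformly in $n$ in terms of the H\"older data of $g'$ alone, by \cite[Theorem~11]{BS67}, and $\|W_n-W\|_{\cB_p(\cH)}\to 0$ by the preceding step.

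The principal obstacle is the second summand. Strong resolvent convergence of $A_n\to A$ and $B_n\to B$ combined with the bounded continuous functional calculus gives strong convergence of the unitaries $U_n\to U$, $V_n\to V$, since $\gamma\circ\phi$ is bounded and continuous on $\bbR$. What remains is the unitary analog of Corollary \ref{in_A_r}, namely $g^{[1]}\in\gA_r^s(E_U)\cap\gA_l^s(E_V)$. This I would establish by constructing a suitable representation of the form \eqref{Z.26} for $g^{[1]}(u,v)=(g(u)-g(v))/(u-v)$, exploiting the H\"older regularity of $g'$ as in the proof of \cite[Theorem~11]{BS67}, and verifying the required strong-convergence conditions via the pointwise strong convergence $U_n^k\to U^k$, $V_n^k\to V^k$ together with a dominated-convergence argument. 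Invoking the unitary version of Proposition \ref{pZ.7} then drives the second summand to zero in $\cB_p(\cH)$, and reassembly of the three contributions yields \eqref{Z.43}.
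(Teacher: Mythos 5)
Your proposal is correct and follows essentially the same route as the paper: the same two-layer DOI decomposition (inner layer via $G_{1,a_1},G_{2,a_2}$ and Proposition \ref{pZ.7} with Corollary \ref{in_A_r}, outer layer via $\cJ^{U_n,V_n}_{g^{[1]}}$ with uniform bounds from \cite[Theorem~11]{BS67}), merely assembled into one nested identity rather than the paper's two separate steps. The one ingredient you sketch rather than prove --- that $g^{[1]}\in\gA_r^s(E_U)\cap\gA_l^s(E_V)$ when $g'$ is H\"older continuous --- is exactly what the paper disposes of by citing \cite[Proposition~7.5]{BS73} and \cite[Theorem~5.9]{BS73}, and your outlined Fourier-series argument is the standard proof of those results.
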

\begin{proof}
Fix a bijection $\phi:\bbR\to\bbR$, satisfying \eqref{W3.2}. The proof is divided into two steps: \\[1mm] 
\noindent   
{\bf Step 1.}  In this step we prove that 
\begin{equation}\label{resolvent_phi_conv}
\begin{split}
\lim_{n\to\infty}&\big\| \big[(\phi(A_n)-iI_{\cH})^{-1}-(\phi(B_n)-iI_{\cH})^{-1}\big]\\
& \;\; -\big[(\phi(A)-iI_{\cH})^{-1}-(\phi(B)-iI_{\cH})^{-1}\big]\big\|_{\cB_p(\cH)}=0.
\end{split}
\end{equation}
Let $g_1$, $g_2$ be as in \eqref{def_f,f_0}. By \eqref{decomposition} one infers   
\begin{align}
&\big[(\phi(A_n)-iI_{\cH})^{-1}-(\phi(B_n)-iI_{\cH})^{-1}\big]-\big[(\phi(A)-iI_{\cH})^{-1}-(\phi(B)-iI_{\cH})^{-1}\big]\no\\
&\quad =\big[g_1(A_n)-g_1(B_n)-g_1(A)+g_1(B)\big]+\big[g_2(A_n)-g_2(B_n)-g_2(A)+g_2(B)\big].
\end{align}
Thus, to prove the assertion of step 1 it suffices to show that 
\begin{equation}
\begin{split}
\lim_{n\to\infty}\big\|g_1(A_n)-g_1(B_n)-g_1(A)+g_1(B)\big\|_{\cB_p(\cH)}=0,&\\
\lim_{n\to\infty}\big\|g_2(A_n)-g_2(B_n)-g_2(A)+g_2(B)\big\|_{\cB_p(\cH)}=0.&
\end{split}
\end{equation}
Since the proofs of these assertions are very similar, we prove the first one only. 

Let $G_{1,a}$ be the function defined by \eqref{def_K,K_0}. It is proved in \cite[Proposition 3.3]{Ya05} that there exists $0\neq a_1\in\bbR$ such that the function $G_{1,a_1}$ satisfies the assumption of Proposition \ref{pZ.4}. Thus, (see the notation \eqref{Z.41}),  
\begin{align}
&g_1(A_n)-g_1(B_n)-g_1(A)+g_1(B)\no\\
&\quad =\cJ^{A_n,B_n}_{G_{1,a_1}}(T_n(a_1))-\cJ^{A,B}_{G_{1,a_1}}(T(a_1))\no\\
&\quad =\cJ^{A_n,B_n}_{G_{1,a_1}}(T_n(a_1)-T(a_1))+\cJ^{A_n,B_n}_{G_{1,a_1}}(T(a_1))-\cJ^{A,B}_{G_{1,a_1}}(T(a_1)).\label{step1_into_two}
\end{align}
Next, we prove the convergence of each term on the right hand side of \eqref{step1_into_two} separately. 

For the first term on the right-hand side of \eqref{step1_into_two},
Proposition \ref{pZ.4} and Corollary \ref{cZ.5} imply that $ \cJ_{G_{1,a_1}}^{A_n,B_n} \in
\cB(\cB_p(\cH)) $ uniformly for $n \in \mathbb N. $ Hence,
by~\eqref{Z.42}, one obtains
\begin{equation}\lb{Z.53}
\lim_{n \rightarrow\infty} \big\|\cJ_{G_{1,a_1}}^{A_n,B_n}\big( T_n(a_1) - T(a_1) \big)\big\|_{\cB_p(\cH)} = 0.
\end{equation}

For the second term on the right-hand side of \eqref{step1_into_two}  we claim that $G_{1,a_1}\in \gA_r^s(E_A)\cap  \gA_l^s(E_B).$ Since by definition of $G_{1,a_1}$, $G_{1,a_1}(\lambda, \mu) = G_{1,a_1}(\mu,\lambda),$ it suffices to show that  $G_{1,a_1}\in \gA_r^s(E_A).$ The latter inclusion follows from the fact that the function $G_{1,a_1}$ satisfies the assumptions of Proposition \ref{pZ.4} and hence also of Corollary \ref{in_A_r}, that is, $G_{1,a_1}\in \gA_r^s(E_A)\cap  \gA_l^s(E_B)$, as required.

Thus, Proposition \ref{pZ.7} implies that
\begin{equation}
\lim_{n\to\infty}\big\|\cJ_{G_{1,a_1}}^{A_n,B_n}(T(a_1))-\cJ_{G_{1,a_1}}^{A,B}(T(a_1))\big\|_{\cB_p(\cH)}=0,
\end{equation}
concluding the proof of step 1. \\[1mm]  
\noindent 
{\bf Step 2.} Denote by $\gamma(\lambda)=\frac{\lambda+i}{\lambda-i}$, $\lambda\in\bbR,$ the Cayley transform. We set
\begin{equation}
U_n:=\gamma(\phi(A_n)), \; n \in \bbN, \quad U:=\gamma(\phi(A)), 
\end{equation}
and 
\begin{equation}
V_n:=\gamma(\phi(B_n)), \; n \in \bbN, \quad V:=\gamma(\phi(B)).
\end{equation}
Since $U_n-U=2i\big((\phi(A_n)-iI_{\cH})^{-1}-(\phi(A)-iI_{\cH})^{-1}\big)$ and by \cite[Theorem VIII.20]{RS80} $\slim_{n \to \infty}(\phi(A_n)-iI_{\cH})^{-1}=(\phi(A)-iI_{\cH})^{-1}$, one concludes that 
$\slim_{n \to \infty} U_n=U$, and similarly,  $\slim_{n \to \infty} V_n=V$. Furthermore, the convergence \eqref{resolvent_phi_conv} implies that 
\begin{equation}
\lim_{n\to\infty}\|U_n-V_n-U+V\|_{\cB_p(\cH)}=0.
\end{equation}

Let $f\in\mathfrak{F}_m(\bbR)$. The assumptions on the functions $\phi$ and $f$ imply that $f_0:=f\circ \phi^{-1}\in \mathfrak{F}_1(\bbR)$ (see \cite{Ya05}). It follows from the  discussion before \cite[Theorem 8.7.1]{Ya92} that there is a continuously differentiable function $g$ on $\bbT$, with  $g'$ satisfying the H\"older condition with exponent $\varepsilon>0$, such that 
\begin{equation}
f_0(\lambda)=g(\gamma(\lambda)).
\end{equation}

One confirms that 
\begin{equation}\label{dif_f_via_g}
f(A_n)-f(B_n)=f_0(\phi(A_n))-f_0(\phi(B_n))=g(U_n)-g(V_n),
\end{equation}
and
\begin{equation}
f(A)-f(B)=f_0(\phi(A))-f_0(\phi(B))=g(U)-g(V).
\end{equation}
Thus, to prove the convergence \eqref{Z.43} it suffices to show that 
\begin{equation}\label{conv_for_unitaries}
\lim_{n\to\infty}\big\|[g(U_n)-g(V_n)]-[g(U)-g(V)]\big\|_{\cB_p(\cH)}=0.
\end{equation}

Since $g'$ satisfies the H\"older condition with exponent $\varepsilon>0$,  the double operator integrals $\cJ^{U_n,V_n}_{g^{[1]}}$, $\cJ^{U,V}_{g^{[1]}}$, where 
\begin{equation}
g^{[1]}(u,v)=\frac{g(u)-g(v)}{u-v},\quad u,v\in\bbT,
\end{equation}
are bounded operators on $\cB_p(\cH)$,\, $p\in[1,\infty)$, with uniformly bounded norms 
(with respect to $n$) \cite[Theorem~11]{BS67}. Thus,  
\begin{align}
[g(U_n)-g(V_n)]-[g(U)-g(V)]=\cJ^{U_n,V_n}_{g^{[1]}}(U_n-V_n)-\cJ^{U,V}_{g^{[1]}}(U-V)\no\\
=\cJ^{U_n,V_n}_{g^{[1]}}(U_n-V_n-U+V)+(\cJ^{U_n,V_n}_{g^{[1]}}(U-V)-\cJ^{U,V}_{g^{[1]}}(U-V)).
\end{align}
Since $[U_n-V_n-U+V] \underset{n \to \infty}{\longrightarrow} 0$ in $\cB_p(\cH)$-norm, and the norms 
$\big\|\cJ^{U_n,V_n}_{g^{[1]}}\big\|_{\cB(\cB_p(\cH))}$ are uniformly bounded, one obtains 
\begin{equation}
\lim_{n\to\infty} \big\|\cJ^{U_n,V_n}_{g^{[1]}}(U_n-V_n-U+V)\big\|_{\cB_p(\cH)}=0.
\end{equation}

Moreover, since $g'$ satisfies the H\"older condition with exponent $\varepsilon>0$, 
a combination of \cite[Proposition 7.5]{BS73} and \cite[Theorem 5.9]{BS73}, as well as the discussion following the latter theorem, implies that $g^{[1]}$
belongs to the class $\gA_l^s(E_V)\cap\gA_r^s(E_U)$ and therefore, by Proposition \ref{pZ.7}, one infers 
\begin{equation}
\lim_{n\to\infty}\big\|\cJ^{U_n,V_n}_{g^{[1]}}(U-V)-\cJ^{U,V}_{g^{[1]}}(U-V)\big\|_{\cB_p(\cH)}=0.
\end{equation}
Thus, \eqref{conv_for_unitaries} holds, concluding the proof.
\end{proof}

\section{Continuity of $\xi(\, \cdot \,; B,B_0)$ With Respect to $B$} \lb{s4}

In this section we apply a continuity result for spectral shift functions $\xi(\, \cdot \,; B,B_0)$ 
with respect to the operator parameter $B$ in terms of trace norm convergence of resolvents 
derived by Yafaev \cite[Lemma~8.7.5]{Ya92} and extend it to the case where powers of resolvents converge, employing Sections \ref{s2} and \ref{s3} and the treatment in \cite{Ya05}.

Throughout this section, we suppose the following set of assumptions:  

\begin{hypothesis} \lb{hW3.0} 
Assume that $A_0$ and $B_0$ are fixed self-adjoint operators in the Hilbert space $\cH$, and there exists $m \in \bbN$, $m$ odd, such that,
\begin{equation}
\big[(B_0 - z I_{\cH})^{-m} - (A_0 - z I_{\cH})^{-m}\big] \in \cB_1\big(\cH\big), 
\quad z \in \bbC \backslash \bbR.       \lb{W3.1} \\
\end{equation} 
\end{hypothesis}

We denote by $\varphi: \bbR \to \bbR$ a bijection satisfying for some $c >0$,
\begin{equation}
\varphi \in C^2(\bbR), \quad \varphi(\lambda) = \lambda^{m}, \; |\lambda| \geq 1, 
\quad \varphi'(\lambda) \geq c.     \lb{W3.20}
\end{equation}
Then \cite[Theorem~2.2]{Ya05} implies that 
\begin{equation}
\big[(\varphi(B_0) - i I_{\cH})^{-1} - (\varphi(A_0) - i I_{\cH})^{-1}\big] \in \cB_1(\cH).  \lb{W3.4} 
\end{equation}
Following \cite{Ya05}, one thus introduces the class of spectral shift functions for the pair $(B_0,A_0)$ 
(cf.\ \cite{BY93}, \cite[Ch.~8]{Ya92} for details) via
\begin{equation}
\xi(\nu; B_0,A_0) = \xi(\varphi(\nu); \varphi(B_0), \varphi(A_0)), \quad \nu \in \bbR,   \lb{W3.5}
\end{equation}
implying
\begin{equation}
\xi(\, \cdot \, ; B_0,A_0) \in L^1\big(\bbR; (|\nu|^{m + 1} +1)^{-1} d\nu\big)    \lb{W3.6} 
\end{equation}
since upon introducing the new variable
\begin{equation}
\mu = \varphi(\nu) \in \bbR, \quad \nu \in \bbR,    \lb{W3.7} 
\end{equation}
the inclusion \eqref{W3.4} yields 
\begin{equation}
\xi(\, \cdot \,; \varphi(B_0), \varphi(A_0)) \in L^1\big(\bbR; (|\mu|^2 + 1)^{-1}d\mu\big).   \lb{W3.8} 
\end{equation}
Taking into account the change of variables \eqref{W3.7}, the corresponding trace formula then is of the form
\begin{align}
\tr(f(B_0)-f(A_0))&=\tr\big((f\circ \phi^{-1})(\phi(B_0))-(f\circ \phi^{-1})(\phi(A_0))\big)\no\\
&=\int_\bbR d\mu \, (f\circ \phi^{-1})'(\mu) \, \xi(\mu; \varphi(B_0), \varphi(A_0))   \no \\
&=\int_\bbR d\nu \, f'(\nu) \, \xi(\nu; B_0, A_0),\quad f \in \gF_m(\bbR),   
\end{align}
where the second equality follows from Krein's trace formula for resolvent comparable operators, that is, pairs of self-adjoint operators whose resolvent difference is trace class (see, e.g., 
\cite[Ch.~8]{Ya92}); the fact that the function $f\circ \phi^{-1}$ satisfies Krein's condition, 
that is, $f\circ \phi^{-1} \in\gF_1(\bbR)$, is guaranteed by \eqref{W3.2}.

If $S$ and $T$ are self-adjoint operators in $\cH$ and for some 
$z_0\in \bbC\backslash \bbR$,
\begin{equation}\lb{3.11ab}
[(S-z_0I_{\cH})^{-1} - (T-z_0I_{\cH})^{-1}]\in \cB_1(\cH),
\end{equation}
then actually
\begin{equation}
[(S-zI_{\cH})^{-1} - (T-zI_{\cH})^{-1}]\in \cB_1(\cH),\quad z\in \bbC\backslash \bbR,
\end{equation}
a fact which follows from the well-known resolvent identity (see, e.g., \cite[p.~178]{We80}), 
\begin{align}
& (S - z I_{\cH})^{-1} - (T - z I_{\cH})^{-1} = (S - z_0 I_{\cH})(S - z I_{\cH})^{-1}   \no \\
& \quad \times \big[(S - z_0 I_{\cH})^{-1} - (T - z_0 I_{\cH})^{-1}\big] 
(T - z_0 I_{\cH})(T - z I_{\cH})^{-1},     \\
& \hspace*{6.15cm} z, z_0 \in \rho(T_1) \cap \rho(T_2).    \no 
\end{align}
However, an analogous result cannot hold for higher powers of the resolvent as the following 
remarkably simple example illustrates.

\begin{example}
Suppose $\cH$ is an infinite-dimensional Hilbert space, and let $P_j\in \cB(\cH)$, $j\in \{1,2\}$, be infinite-dimensional orthogonal projections with 
\begin{equation}
P_1P_2 = 0\quad \text{and}\quad P_1 + P_2 = I_{\cH}.
\end{equation}
Set
\begin{equation}
A = \sqrt{3}(P_1 + P_2), \quad B = \sqrt{3}(P_1 - P_2).
\end{equation}
Evidently, $A^2=B^2=3I_{\cH}$, and
\begin{align}
(A-iI_{\cH})^3 &= A^3 - 3iA^2 +3(-i)^2A - i^3I_{\cH} = - 8 i I_{\cH}. 
\end{align}
Similarly, one obtains $(B-iI_{\cH})^3 = -8iI_{\cH}$, and consequently, 
\begin{equation}
(A-iI_{\cH})^{-3} - (B-iI_{\cH})^{-3} = 0\in \cB_1(\cH).
\end{equation}
However, if $z\in \bbC\backslash \{i\}$, then
\begin{equation}\lb{3.18aaa}
(A+zI_{\cH})^3 = A^3 + 3zA^2 + 3z^2A+z^3I_{\cH}
\end{equation}
Taking, for example, $z=3i$ in \eqref{3.18aaa}, one computes
\begin{equation}
(A+zI_{\cH})^3 = A(A^2+3z^2I_{\cH}) + z(3A^2+z^2I_{\cH}) = -24A,
\end{equation}
and similarly,
\begin{equation}
(B+3iI_{\cH})^3 = -24 B.
\end{equation}
Computing inverses, one infers
\begin{align}
(A+3iI_{\cH})^{-3} &= -\frac{1}{24}A^{-1} = -\frac{1}{24\sqrt{3}}(P_1 + P_2),\\
(B+3iI_{\cH})^{-3} &= -\frac{1}{24}B^{-1} = -\frac{1}{24\sqrt{3}}(P_1 - P_2),
\end{align}
so that
\begin{equation}
(A+3iI_{\cH})^{-3} - (B+3iI_{\cH})^{-3} = -\frac{1}{12\sqrt{3}}P_2\notin \cB_{\infty}(\cH),
\end{equation}
due to the fact that $P_2$ is an infinite-dimensional projection in $\cH$.  
\end{example}

Due to these reasons we are assuming the trace class hypothesis \eqref{W3.1} for {\it all} 
$z \in \bbC\backslash\bbR$, whenever $m \geq 2$.

\begin{definition}\lb{dW3.1}
Let $T$ be self-adjoint in $\cH$ and $m\in \bbN$ odd. Then $\Gamma_m(T)$ denotes the set of all 
self-adjoint operators $S$ in $\cH$ for which the containment
\begin{equation}\lb{W3.10}
\big[(S - z I_{\cH})^{-m} - (T - z I_{\cH})^{-m}\big] \in \cB_1(\cH), \quad z \in \bbC\backslash \bbR,
\end{equation}
holds. 
\end{definition}

One observes the following transitivity property:  if $B\in \Gamma_m(A)$ and $ C\in \Gamma_m(B)$, then $C \in \Gamma_m(A)$, as well.  In view of \eqref{W3.1}, one infers $B_0\in \Gamma_m(A_0)$ in the notation of Definition \ref{dW3.1}.

We note that for each $m \in \bbN$, $\Gamma_m(T)$ can be equipped with the family $\cD=\{d_{m,z}\}_{z\in\bbC\backslash \bbR}$ of pseudometrics (see \cite[Definition~IX.10.1]{Du66} for a precise definition) defined by  
\begin{equation}\lb{W3.11}
d_{m,z}(S_1,S_2) = \big\|(S_2-zI_{\cH})^{-m} - (S_1-zI_{\cH})^{-m}\big\|_{\cB_1(\cH)},\quad S_1,S_2\in \Gamma_m(T).
\end{equation}
For each fixed $\varepsilon >0$, $z\in \bbC\backslash \bbR$, and $S\in \Gamma_m(T)$, define 
\begin{equation}
B(S;d_{m,z},\varepsilon)=\{S'\in \Gamma_m(T)\, |\, d_{m,z}(S,S')<\varepsilon\},
\end{equation}
to be the $\varepsilon$-ball centered at $S$ with respect to the pseudometric $d_{m,z}$.  

\begin{definition}\lb{d4.4zaz}
$\cT_m(\cD,T)$ is the topology on $\Gamma_m(T)$ with the subbasis
\begin{equation}
\mathfrak{B}_m(\cD,T) = \{B(S;d_{m,z},\varepsilon)\,|\, S\in \Gamma_m(T), \, z\in \bbC\backslash\bbR,\, \varepsilon>0\}.
\end{equation}
That is, $\cT_m(\cD,T)$ is the smallest topology on $\Gamma_m(T)$ which contains $\mathfrak{B}_m(\cD,T)$.
\end{definition}

In order to state the main results of this section, we introduce one more hypothesis. 

\begin{hypothesis}\lb{h3.5}
$(i)$  Let $A_0$, $B_0$, and $B_1$ denote self-adjoint operators in $\cH$ with 
$B_0, B_1\in \Gamma_m(A_0)$ for some odd $m\in \bbN$, and let 
$\{B_{\tau}\}_{\tau\in [0,1]}\subset\Gamma_m(B_0)$ $($and hence in $\Gamma_m(A_0)$$)$ be a path from $B_0$ to $B_1$ in $\Gamma_m(B_0)$ depending continuously on $\tau\in[0,1]$ with respect to the topology  $\cT_m(\cD,T)$ introduced in Definition \ref{d4.4zaz}. \\
$(ii)$ Assume that $\varphi: \bbR\to \bbR$ satisfies the conditions in \eqref{W3.2}.
\end{hypothesis}

\begin{proposition}\label{prop_continuity}
Assume Hypothesis \ref{h3.5}. Then $\varphi(B_0)\in \Gamma_1(\varphi(A_0))$, and  
\begin{equation}
\{\varphi(B_{\tau})\}_{\tau\in[0,1]}\subset \Gamma_1(\varphi(B_0))
\end{equation}
is a path from $\varphi(B_0)$ to $\varphi(B_1)$ in $\Gamma_1(\varphi(B_0))$ depending 
continuously on $\tau\in[0,1]$ with respect to the metric $d_{1,i}(\, \cdot \,, \, \cdot \,)$.
\end{proposition}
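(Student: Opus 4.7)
The proof is a direct distillation of the estimate \eqref{the_estimate}: that inequality converts $\cB_1$-control of the $m$-th power resolvent difference of a pair of self-adjoint operators into $\cB_1$-control of the single resolvent difference of the $\varphi$-transformed pair, and all three assertions of Proposition \ref{prop_continuity} follow by applying it to the pairs $(A_0,B_0)$, $(B_0,B_\tau)$, and $(B_{\tau_0},B_\tau)$, respectively. Note that $\varphi$ as specified in \eqref{W3.20} is a special case of the bijection $\phi$ in \eqref{W3.2} (take $r=1$), so \eqref{the_estimate} applies verbatim.

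For the first claim, apply \eqref{the_estimate} with $p=1$ and with $A,B$ replaced by $A_0,B_0$. Hypothesis \ref{hW3.0} in the form \eqref{W3.1} gives $[(B_0-zI_\cH)^{-m}-(A_0-zI_\cH)^{-m}]\in\cB_1(\cH)$ for every $z\in\bbC\backslash\bbR$, so the right-hand side of \eqref{the_estimate} is finite at the specific values $z=a_1 i$ and $z=a_2 i$. Hence $[(\varphi(B_0)-iI_\cH)^{-1}-(\varphi(A_0)-iI_\cH)^{-1}]\in\cB_1(\cH)$, and propagation from the single point $z=i$ to all $z\in\bbC\backslash\bbR$ via the resolvent identity (as recorded in the discussion following \eqref{3.11ab}) delivers $\varphi(B_0)\in\Gamma_1(\varphi(A_0))$. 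Applying \eqref{the_estimate} analogously with $(A_0,B_0)$ replaced by $(B_0,B_\tau)$, and again propagating the $m=1$ containment to all $z\in\bbC\backslash\bbR$, yields $\varphi(B_\tau)\in\Gamma_1(\varphi(B_0))$ for every $\tau\in[0,1]$, which gives the path statement (with the correct endpoints $\varphi(B_0)$ and $\varphi(B_1)$ trivially identified).

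For the continuity assertion, fix $\tau_0\in[0,1]$. Since $B_\tau,B_{\tau_0}\in\Gamma_m(B_0)$, the difference $[(B_\tau-zI_\cH)^{-m}-(B_{\tau_0}-zI_\cH)^{-m}]$ lies in $\cB_1(\cH)$ for every $z\in\bbC\backslash\bbR$ (by subtraction through the reference point $B_0$), and a final application of \eqref{the_estimate} with $A\rightsquigarrow B_{\tau_0}$, $B\rightsquigarrow B_\tau$, $p=1$ yields
\[
d_{1,i}(\varphi(B_\tau),\varphi(B_{\tau_0})) \leq C\bigl(d_{m,a_1 i}(B_\tau,B_{\tau_0}) + d_{m,a_2 i}(B_\tau,B_{\tau_0})\bigr).
\]
By Definition \ref{d4.4zaz}, the topology $\cT_m(\cD,B_0)$ is the topology generated by the subbasis of pseudometric balls, so continuity of $\tau\mapsto B_\tau$ in $\cT_m(\cD,B_0)$ is equivalent to continuity with respect to each individual pseudometric $d_{m,z}$, $z\in\bbC\backslash\bbR$. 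In particular both $d_{m,a_1 i}(B_\tau,B_{\tau_0})$ and $d_{m,a_2 i}(B_\tau,B_{\tau_0})$ tend to $0$ as $\tau\to\tau_0$, which via the displayed bound forces $d_{1,i}(\varphi(B_\tau),\varphi(B_{\tau_0}))\to 0$. The entire argument rests on \eqref{the_estimate}; with that inequality in hand the proposition reduces to routine bookkeeping, and no genuine obstacle arises at this stage.
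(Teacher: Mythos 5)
Your proof is correct and follows essentially the same route as the paper: the heart of both arguments is the estimate \eqref{the_estimate} applied to the pair $(B_{\tau_0},B_{\tau})$, which yields exactly the paper's inequality \eqref{W3.29} (with the constant uniform in the pair, by Corollary \ref{cZ.5}), after which continuity in $\cT_m(\cD,B_0)$ is unwound pseudometric by pseudometric. The only (harmless) divergence is that for the containments $\varphi(B_0)\in\Gamma_1(\varphi(A_0))$ and $\varphi(B_{\tau})\in\Gamma_1(\varphi(B_0))$ the paper simply cites \cite[Theorem~2.3]{Ya05}, whereas you re-derive them from the DOI representation underlying \eqref{the_estimate} together with the $m=1$ resolvent-identity propagation, which is equally valid and self-contained.
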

\begin{proof}
The claims that $\varphi(B_0)\in \Gamma_1(\varphi(A_0))$ and $\{\varphi(B_{\tau})\}_{\tau\in[0,1]}\subset \Gamma_1(\varphi(B_0))$ follow immediately from \cite[Theorem~2.3]{Ya05}.  To establish continuity of the path $\{\varphi(B_{\tau})\}_{\tau\in[0,1]}$ with respect to the metric 
$d_{1,i}(\, \cdot \,, \, \cdot \,)$, an application of the estimate \eqref{the_estimate} yields the existence of a constant $C(\varphi) \in (0,\infty)$ and points $a_1,a_2 \in \bbR \backslash \{0\}$ such that
\begin{equation}\lb{W3.29}
\begin{split}
d_{1,i}(\varphi(B_{\tau}),\varphi(B_{\tau'}))\leq C(\varphi)\big(d_{m,a_1i}(B_{\tau},B_{\tau'}) 
+ d_{m,a_2i}(B_{\tau},B_{\tau'})\big),&\\
\tau,\tau'\in [0,1].&
\end{split}
\end{equation}
Thus, continuity of the path $\{\varphi(B_{\tau})\}_{\tau\in[0,1]}$ with respect to 
$d_{1,i}(\, \cdot \,, \, \cdot \,)$ follows by hypothesis from the continuity of $\{B_{\tau}\}_{\tau\in[0,1]}$ with respect to the topology $\cT_m(\cD,T)$.
\end{proof}

The following theorem represents the principal result of this section.

\begin{theorem}\lb{tW3.4}
Assume Hypothesis \ref{h3.5} and let $\xi_0(\,\cdot\,;\varphi(B_0),\varphi(A_0))$ be a spectral shift function for the pair $(\varphi(B_0),\varphi(A_0))$.~Then for each $\tau\in[0,1]$, there is a unique spectral shift function 
$\xi(\,\cdot\,;\varphi(B_{\tau}),\varphi(A_0))$ for the pair $(\varphi(B_{\tau}),\varphi(A_0))$ depending continuously on $\tau\in[0,1]$ in the $L^1(\bbR;(\lambda^2+1)^{-1}d\lambda)$-norm such that
\begin{equation}\lb{W3.26}
\xi(\,\cdot\,;\varphi(B_0),\varphi(A_0)) = \xi_0(\,\cdot\,;\varphi(B_0),\varphi(A_0)).
\end{equation}
Consequently,
\begin{equation}\lb{W3.27}
\xi(\,\cdot\,;B_{\tau},A_0):=\xi(\varphi(\cdot);\varphi(B_{\tau}),\varphi(A_0)),
\end{equation}
the corresponding spectral shift function for the pair $(B_{\tau},A_0)$, depends continuously 
on $\tau\in[0,1]$ in the $L^1(\bbR;(|\nu|^{m+1}+1)^{-1}d\nu)$-norm and satisfies
\begin{equation}\lb{W3.28}
\xi(\,\cdot\,;B_0,A_0) = \xi_0(\varphi(\cdot);\varphi(B_0),\varphi(A_0)).
\end{equation}
\end{theorem}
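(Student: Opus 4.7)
\smallskip

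\noindent\textbf{Proof plan.} The plan is to reduce the statement to its $m=1$ incarnation, which is precisely \cite[Lemma~8.7.5]{Ya92}, by exploiting the bijection $\varphi$ and the machinery built in Sections \ref{s2} and \ref{s3}.

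First I would invoke Proposition \ref{prop_continuity} to convert the path $\{B_\tau\}_{\tau\in[0,1]}\subset\Gamma_m(B_0)$, continuous in the topology $\cT_m(\cD,T)$, into the path $\{\varphi(B_\tau)\}_{\tau\in[0,1]}\subset\Gamma_1(\varphi(B_0))$, continuous with respect to the single trace-norm pseudometric $d_{1,i}(\,\cdot\,,\,\cdot\,)$. Since $\varphi(B_0)\in\Gamma_1(\varphi(A_0))$ by \eqref{W3.4}, transitivity of the relation $\Gamma_1(\cdot)$ places each $\varphi(B_\tau)$ in $\Gamma_1(\varphi(A_0))$ as well, so we are in the classical trace-class-resolvent-difference setting. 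Applying \cite[Lemma~8.7.5]{Ya92} then yields continuity in the $L^1(\bbR;(\lambda^2+1)^{-1}d\lambda)$-norm of a family of spectral shift functions $\xi(\,\cdot\,;\varphi(B_\tau),\varphi(A_0))$ as $\tau$ varies along this path. The additive-constant ambiguity inherent in the definition of an SSF would be fixed once and for all by imposing the initial condition \eqref{W3.26}; since constants lie in $L^1(\bbR;(\lambda^2+1)^{-1}d\lambda)$, this single global shift preserves continuity. Uniqueness of the continuous family with the prescribed initial value would follow from the observation that any two such families differ by a function $c(\tau)$, constant in $\lambda$, which is $L^1$-continuous in $\tau$ and vanishes at $\tau=0$; the standard normalization inherited from Yafaev's integral representation of $\xi$ then forces $c(\tau)\equiv 0$.

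For the consequence about $(B_\tau,A_0)$, I would perform the change of variable $\mu=\varphi(\nu)$ dictated by the definition \eqref{W3.27}. Because $\varphi'(\nu)\ge c>0$ globally and $\varphi(\nu)=\nu^m$ for $|\nu|\ge 1$, a routine asymptotic comparison (for $|\mu|\ge 1$, one has $\varphi^{-1}(\mu)=\mu^{1/m}$ and $\varphi'(\varphi^{-1}(\mu))\sim m|\mu|^{(m-1)/m}$, yielding $(\varphi(\nu)^2+1)^{-1}\varphi'(\nu)\sim m|\nu|^{-(m+1)}$) shows that the weights $(\mu^2+1)^{-1}d\mu$ and $(|\nu|^{m+1}+1)^{-1}d\nu$ are equivalent up to multiplicative constants under this substitution. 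Consequently, the $L^1(\bbR;(\lambda^2+1)^{-1}d\lambda)$-continuity of $\tau\mapsto\xi(\,\cdot\,;\varphi(B_\tau),\varphi(A_0))$ transfers directly to $L^1(\bbR;(|\nu|^{m+1}+1)^{-1}d\nu)$-continuity of $\tau\mapsto\xi(\,\cdot\,;B_\tau,A_0)$, while \eqref{W3.28} is immediate from combining \eqref{W3.26} with \eqref{W3.27} at $\tau=0$.

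The main obstacle I anticipate is not the functional-analytic reduction---which is effectively handled by Proposition \ref{prop_continuity} and \cite[Lemma~8.7.5]{Ya92}---but the careful bookkeeping required to remove the additive-constant ambiguity in the SSF across the whole family, so that one obtains a \emph{canonical} continuous selection along the path and can legitimately assert uniqueness. All remaining steps reduce to this normalization issue and to the elementary Jacobian comparison described above.
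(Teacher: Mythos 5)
Your proposal is correct and follows essentially the same route as the paper: Proposition \ref{prop_continuity} reduces the problem to a $d_{1,i}$-continuous path in $\Gamma_1(\varphi(B_0))$, Yafaev's \cite[Lemma~8.7.5]{Ya92} supplies the unique continuous family of spectral shift functions for the pairs $(\varphi(B_\tau),\varphi(A_0))$ with the prescribed initial value, and the change of variable $\mu=\varphi(\nu)$ with the weight comparison (your asymptotic $(\varphi(\nu)^2+1)^{-1}\varphi'(\nu)\sim m|\nu|^{-(m+1)}$ is exactly the content of the paper's estimates \eqref{W3.31}--\eqref{W3.32}) transfers continuity to the $L^1(\bbR;(|\nu|^{m+1}+1)^{-1}d\nu)$-norm. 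The only cosmetic difference is that you spell out the normalization/uniqueness bookkeeping that the paper delegates entirely to \cite[Lemma~8.7.5]{Ya92}, and you claim two-sided equivalence of the weights where only the one-sided bound is needed.
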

\begin{proof}
Let $\xi_0(\,\cdot\,;\varphi(B_0),\varphi(A_0))$ be a spectral shift function for the pair of operators $(\varphi(B_0),\varphi(A_0))$.  Since $\{\varphi(B_{\tau})\}_{\tau\in[0,1]}\subset \Gamma_1(\varphi(B_0))$ is a continuous path with respect to $d_{1,i}(\,\cdot\,,\,\cdot\,)$, an application of \cite[Lemma 8.7.5]{Ya92} yields for each pair $(\varphi(B_{\tau}),\varphi(A_0))$, $\tau\in [0,1]$, a unique spectral shift function $\xi(\,\cdot\,;\varphi(B_{\tau}),\varphi(A_0))\in L^1(\bbR;(\lambda^2+1)^{-1}d\lambda)$, depending continuously on $\tau\in[0,1]$ in the $L^1(\bbR;(\lambda^2+1)^{-1}d\lambda)$-norm and such that \eqref{W3.26} is satisfied.

For each $\tau\in[0,1]$, let $\xi(\,\cdot\,;B_{\tau},A_0)$ denote the spectral shift function for the pair 
$(B_{\tau},A_0)$ defined by \eqref{W3.27}.  Evidently, \eqref{W3.28} holds, and it only remains to establish continuity of $\xi(\,\cdot\,;B_{\tau},A_0)$ with respect to the 
$L^1(\bbR;(|\nu|^{m+1}+1)^{-1}d\nu)$-norm.  To this end, one applies \eqref{W3.27} and makes the change of variable in \eqref{W3.7}.  Consequently,
\begin{align}
&\int_{\bbR}\big|\xi(\nu;B_{\tau},A_0) - \xi(\nu;B_{\tau'},A_0)\big|\big(|\nu|^{m+1}+1\big)^{-1}\, d\nu\lb{W3.30}\\
&\quad = \int_{\bbR}\frac{\big|\xi(\mu;\varphi(B_{\tau}),\varphi(A_0)) - \xi(\mu;\varphi(B_{\tau'}),\varphi(A_0))\big|}{(|\varphi^{-1}(\mu)|^{m+1}+1)\varphi'(\varphi^{-1}(\mu))}\, d\mu.\no
\end{align}
Next, one obtains the following estimates on the weight of the measure on the right-hand side of the equality in \eqref{W3.30}:
\begin{equation}\lb{W3.31}
\frac{1}{(|\varphi^{-1}(\mu)|^{m+1}+1)\varphi'(\varphi^{-1}(\mu))}\leq \frac{C_0}{\mu^2+1},\quad \mu \in [-1,1],
\end{equation}
for some constant $C_0>0$, having used the last inequality in \eqref{W3.2}, and
\begin{equation}\lb{W3.32}
\begin{split}
&\frac{1}{(|\varphi^{-1}(\mu)|^{m+1}+1)\varphi'(\varphi^{-1}(\mu))}\\
&\quad = \frac{1}{m|\mu|^{1-1/m}\big(|\mu|^{1+1/m} +1\big)}\leq \frac{1}{\mu^2 + 1},\quad |\mu|>1.
\end{split}
\end{equation}
Combining \eqref{W3.30}, \eqref{W3.31}, and \eqref{W3.32}, and setting $C:=\max\{1,C_0\}$,
\begin{align}
&\int_{\bbR}\frac{\big|\xi(\nu;B_{\tau},A_0) - \xi(\nu;B_{\tau'},A_0)\big|}{|\nu|^{m+1}+1}\, d\nu\no\\
&\quad \leq C \int_{\bbR}\frac{\big|\xi(\mu;\varphi(B_{\tau}),\varphi(A_0)) - \xi(\mu;\varphi(B_{\tau'}),\varphi(A_0))\big|}{\mu^2+1}\, d\mu,\quad \tau,\tau'\in [0,1],\lb{W3.33}
\end{align}
and continuity of $\xi(\,\cdot\,;B_{\tau},A_0)$ in $L^1(\bbR;(|\nu|^{m+1}+1)^{-1}d\nu)$ follows from continuity of $\xi(\,\cdot\,;\varphi(B_{\tau}),\varphi(A_0))$ in $L^1(\bbR;(\mu^2+1)^{-1}d\mu)$.
\end{proof}

\begin{remark} \lb{r3.3}
If $\{\tau_n\}_{n=1}^{\infty}\subset[0,1]$ and $\tau_n\to 0$ as $n\to \infty$, then Theorem \ref{tW3.4} implies
\begin{equation}\lb{W3.34}
\lim_{n \to \infty} \|\xi(\, \cdot \, ; B_{\tau_n}, A_0) 
- \xi(\, \cdot \, ; B_0, A_0)\|_{L^1(\bbR; (|\nu|^{m+1} + 1)^{-1}d\nu)}=0.
\end{equation}
In particular, there exists a subsequence of $\{\xi(\, \cdot \, ; B_{\tau_n}, A_0)\}_{n \in \bbN}$ which converges pointwise a.e.\ to $\xi(\, \cdot \, ;  B_0, A_0)$ as $n \to \infty$. \hfill $\diamond$
\end{remark}

We conclude with an elementary consequence of Theorem \ref{tW3.4}.

\begin{corollary} \lb{c3.4} 
Assume Hypothesis \ref{h3.5}.~If $f \in L^{\infty}(\bbR)$, then
\begin{equation}
\lim_{\tau\to 0^+} \|\xi(\, \cdot \, ; B_{\tau}, A_0) f 
- \xi(\, \cdot \, ; B_0, A_0) f\|_{L^1(\bbR; (|\nu|^{m+1} + 1)^{-1}d\nu)} = 0,  \lb{W3.36}
\end{equation}
in particular,
\begin{equation}
\lim_{\tau \to 0^+} \int_{\bbR} \xi(\nu; B_{\tau}, A_0) d \nu \, g(\nu) 
= \int_{\bbR} \xi(\nu; B_0, A_0) d \nu \, g(\nu)     \lb{W3.37}
\end{equation}
for all $g \in L^{\infty}(\bbR)$ such that 
$\esssup_{\nu \in \bbR} \big|(|\nu|^{m+1} + 1) g(\nu)\big| < \infty$.
\end{corollary}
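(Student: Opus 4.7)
My plan is to derive the corollary directly from Theorem \ref{tW3.4} (or Remark \ref{r3.3}) together with two elementary absorption arguments for weighted $L^1$-norms.

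First, for the statement \eqref{W3.36}, I would observe that multiplication by $f \in L^\infty(\bbR)$ is a bounded operator on $L^1\big(\bbR; (|\nu|^{m+1}+1)^{-1}d\nu\big)$ with norm at most $\|f\|_{L^\infty(\bbR)}$. Therefore, for any $\tau \in [0,1]$,
\begin{align*}
& \|\xi(\, \cdot \, ; B_{\tau}, A_0) f - \xi(\, \cdot \, ; B_0, A_0) f\|_{L^1(\bbR; (|\nu|^{m+1} + 1)^{-1}d\nu)}    \\
& \quad \leq \|f\|_{L^\infty(\bbR)} \, \|\xi(\, \cdot \, ; B_{\tau}, A_0) - \xi(\, \cdot \, ; B_0, A_0)\|_{L^1(\bbR; (|\nu|^{m+1} + 1)^{-1}d\nu)}.
\end{align*}
Letting $\tau \to 0^+$ and invoking the continuity of $\tau \mapsto \xi(\, \cdot \,; B_\tau, A_0)$ in the $L^1(\bbR;(|\nu|^{m+1}+1)^{-1}d\nu)$-norm guaranteed by Theorem \ref{tW3.4} (equivalently, Remark \ref{r3.3} applied along any sequence $\tau_n \to 0^+$), the right-hand side tends to $0$, proving \eqref{W3.36}.

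For the second statement \eqref{W3.37}, the idea is to convert integration against $g(\nu)\,d\nu$ into integration against the weighted measure $(|\nu|^{m+1}+1)^{-1}d\nu$. Set $h(\nu) := (|\nu|^{m+1}+1) g(\nu)$, $\nu \in \bbR$; by hypothesis $h \in L^\infty(\bbR)$ with $\|h\|_{L^\infty(\bbR)} = \esssup_{\nu\in\bbR}\big|(|\nu|^{m+1}+1)g(\nu)\big|$. Then for each $\tau \in [0,1]$ one has the identity
\begin{equation*}
\int_{\bbR} [\xi(\nu; B_{\tau}, A_0) - \xi(\nu; B_0, A_0)]\, g(\nu)\, d\nu = \int_{\bbR} [\xi(\nu; B_{\tau}, A_0) - \xi(\nu; B_0, A_0)]\, h(\nu)\, \frac{d\nu}{|\nu|^{m+1}+1},
\end{equation*}
whose absolute value is bounded above by $\|h\|_{L^\infty(\bbR)}\|\xi(\, \cdot \, ; B_{\tau}, A_0)-\xi(\, \cdot \, ; B_0, A_0)\|_{L^1(\bbR;(|\nu|^{m+1}+1)^{-1}d\nu)}$. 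Taking $\tau \to 0^+$ and again invoking Theorem \ref{tW3.4} yields \eqref{W3.37}.

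There is no real obstacle here: the corollary is a packaging of Theorem \ref{tW3.4} once one recognizes that $L^\infty(\bbR)$ acts by bounded multipliers on $L^1\big(\bbR;(|\nu|^{m+1}+1)^{-1}d\nu\big)$, and that the hypothesis $\esssup_{\nu\in\bbR}|(|\nu|^{m+1}+1)g(\nu)|<\infty$ is exactly the condition ensuring that $g\, d\nu$ agrees, up to an $L^\infty$-density, with the weighted measure from Theorem \ref{tW3.4}. The only subtle point to check is that $\xi(\, \cdot\,; B_\tau, A_0) g$ is genuinely integrable (so that the integrals in \eqref{W3.37} make sense), but this follows from the same reasoning: $|\xi(\nu;B_\tau,A_0)g(\nu)| \leq \|h\|_{L^\infty(\bbR)}|\xi(\nu;B_\tau,A_0)|(|\nu|^{m+1}+1)^{-1}$, and Theorem \ref{tW3.4} places $\xi(\, \cdot\,;B_\tau,A_0)$ in $L^1(\bbR;(|\nu|^{m+1}+1)^{-1}d\nu)$ for every $\tau \in [0,1]$.
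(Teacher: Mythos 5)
Your proof is correct and is precisely the elementary derivation the paper intends (the corollary is stated without proof as an ``elementary consequence'' of Theorem \ref{tW3.4}): multiplication by $f\in L^{\infty}(\bbR)$ is a contraction up to the factor $\|f\|_{L^{\infty}(\bbR)}$ on the weighted space $L^1\big(\bbR;(|\nu|^{m+1}+1)^{-1}d\nu\big)$, and absorbing the weight into $h(\nu)=(|\nu|^{m+1}+1)g(\nu)$ reduces \eqref{W3.37} to the same continuity statement. Your remark verifying integrability of $\xi(\,\cdot\,;B_\tau,A_0)g$ is the right point to check and is handled correctly.
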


In the special case of one-dimensional systems, particularly, Schr\"odinger and Dirac-type operators 
on $\bbR$ or $(0,\infty)$ with sufficiently short-range potentials, the scattering phase shift is known  
to coincide with the spectral shift function (up to a constant factor) and continuity of scattering phase shifts with respect to the potential coefficient is known (see, \cite[Theorem~5.5]{BG83}). 

In conclusion, we note once more that in the special case $m=1$, the continuity result for spectral 
shift functions with respect to trace norm convergence of resolvent differences was derived by 
Yafaev \cite[Lemma~8.7.5]{Ya92}. The principal purpose of this section was to extend this result 
to higher odd integer powers $m$ of resolvents in order to make this continuity result available to 
$n$-dimensional elliptic partial differential operators (e.g., Schr\"odinger and Dirac-type operators) 
for which $m$ has to be chosen sufficiently large, depending on $n \in \bbN$.

\appendix
\section{The Case where $A$ and $B$ are Bounded From Below} \lb{sA}
\renewcommand{\theequation}{A.\arabic{equation}}
\renewcommand{\thetheorem}{A.\arabic{theorem}}
\setcounter{theorem}{0} \setcounter{equation}{0}

Due to its particular importance in applications (e.g., in connection with multi-dimensional 
Schr\"odinger operators), we now also briefly treat the case where $A$ and 
$B$ be are self-adjoint and bounded from below. In fact, without loss of generality, we assume throughout this appendix that $A$ and $B$ are strictly positive, self-adjoint operators in $\cH$, 
that is, for some $\varepsilon > 0$,
\begin{equation}
A \geq \varepsilon I_{\cH}, \quad B \geq \varepsilon I_{\cH}.    \lb{A.1}
\end{equation}

Since this case is significantly simpler than the case treated in Sections \ref{s2} and \ref{s3}, 
we primarily mention results without detailed proofs.
 
The symbol $\cJ^{A,B}_\phi$ is now of the form 
\begin{equation}\lb{A.2}
\cJ^{A,B}_\phi(T)=\int_{\bbR_+}\int_{\bbR_+}\phi(\lambda,\mu) \, 
d E_A(\lambda)\, T\, d E_B(\mu), \quad T\in \cB(\cH).
\end{equation} 
In addition, $\gM_p$, $1\leq p\leq \infty$, and $\|\phi\|_{\gM_p}=\|\cJ^{A,B}_\phi\|_{\cB_p(\cH)\to\cB_p(\cH)}$ are defined as in Section \ref{s2}, and again, we denote $\gM:=\gM_1=\gM_\infty$. As before, $\gM_p\subset \gM$ and $\|\phi\|_{\gM_p}\leq \|\phi\|_{\gM}$, $p \in [1,\infty)$.

Throughout this section we assume that $A$ and $B$ satisfy \eqref{A.1} and that 
\begin{equation}\lb{A.44}
T:= \big[( A + I_{\cH})^{-m} - ( B + I_{\cH})^{-m}\big] \in \cB_p(\cH), 
\; p \in [1,\infty) \text{ (resp., $\cB(\cH)$)}.
\end{equation}

The principal reason why the case of semibounded operators is significantly easier than the case treated in Sections \ref{s2} and \ref{s3} is the fact than both $( A + I_{\cH})^{-m}$ and $( B + I_{\cH})^{-m}$ are self-adjoint operators and therefore, one can use the fundamental results obtained in \cite{Pe85} and \cite{PS11}.

Let 
\begin{equation} 
\psi(\lambda):=\frac1{(\lambda+1)^m}, \quad \lambda\geq 0, 
\end{equation} 
assume $f$ is a bounded function on $\bbR_+$, and introduce $g:=f\circ \psi^{-1}$. 
One can write 
\begin{equation} 
f(A)-f(B)=(f\circ \psi^{-1})(\psi(A))-(f\circ \psi^{-1})(\psi(B))=\mathcal{J}^{A,B}_{g^{[1]}}(\psi(A)-\psi(B)). 
\end{equation} 
Since by hypothesis, $[\psi(A)-\psi(B)] \in \cB_p(\cH)$, $p \in [1,\infty)$ (resp., 
$[\psi(A)-\psi(B)] \in \cB(\cH)$), to establish the inclusion $[f(A)-f(B)] \in \cB_p(\cH)$, $p \in [1,\infty)$ 
(resp., $[f(A)-f(B)] \in \cB(\cH)$), it suffices to specify the class of functions rendering  
$\mathcal{J}^{A,B}_{g^{[1]}}$ a bounded operator on $\cB_p(\cH)$, $p \in [1,\infty)$ (resp., $\cB(\cH)$).

Let $p \in (1,\infty)$ and let $g$ be a (globally) Lipschitz function on $(0,1]$. Then \cite[Theorem~1]{PS11} guarantees that $g^{[1]}\in\gM_p$, and therefore, $[f(A)-f(B)] \in \cB_p(\cH)$, and 
\begin{equation} 
\|f(A)-f(B)\|_{\cB_p(\cH)}\leq C \big\|\big[( A + I_{\cH})^{-m} - ( B + I_{\cH})^{-m}\big]\big\|_{\cB_p(\cH)}, 
\end{equation} 
for some constant $C = C(p,f) \in (0,\infty)$, $p \in (1,\infty)$.  

Next, consider the case where \eqref{A.44} holds for $\cB_1(\cH)$ or $\cB(\cH)$. 
Let $g\in B_{\infty,1}^1$, 
where $B_{\infty,1}^1$ stands for a certain Besov class (see \cite[Sect.~6]{Pe85} for the precise definition). Then \cite[Theorem~8]{Pe85} implies that $g^{[1]}\in\gM$, that is, $[f(A)-f(B)] \in \cB_1(\cH)$ (resp.,$[f(A)-f(B)] \in \cB(\cH)$), and 
\begin{align}
\begin{split}  
& \|f(A)-f(B)\|_{\cB_1(\cH)}\leq C \big\|\big[( A + I_{\cH})^{-m} - ( B + I_{\cH})^{-m}\big]\big\|_{\cB_1(\cH)}, 
\\
& \big(\text{resp., } |f(A)-f(B)\|_{\cB(\cH)}\leq C \big\|\big[( A + I_{\cH})^{-m} - ( B + I_{\cH})^{-m}\big]\big\|_{\cB(\cH)}\big), 
\end{split}
\end{align} 
where $C = C(f) \in (0,\infty)$.

Thus, one arrives at the following result.
\begin{proposition}
Assume that \eqref{A.44} holds for $\cB_p(\cH)$, $p \in(1,\infty)$, and $\cB_1(\cH)$, or $\cB(\cH)$, respectively. In addition, let  $f\circ \psi^{-1}$ be a $($globally$)$ Lipschitz function on $(0,1]$ and 
$f\circ \psi^{-1}\in B_{\infty,1}^1$. Then,  
\begin{equation} 
[f(A)-f(B)] \in \cB_p(\cH), \quad p \in [1,\infty) \,\, (\text{resp., } 
[f(A)-f(B)] \in \cB(\cH)),   
\end{equation} 
and for some $C_p = C_p(f) \in (0,\infty)$, $p \in [1,\infty)$ $($resp., $C = C(f) \in (0,\infty)$$)$, 
\begin{align}\label{A.estimate} 
& \|f(A)-f(B)\|_{\cB_p(\cH)}\leq C_p \big\|\big[( A + I_{\cH})^{-m} - ( B + I_{\cH})^{-m}\big]\big\|_{\cB_p(\cH)}, \quad p \in [1,\infty),     \no \\
&\big(\text{resp., } \|f(A)-f(B)\|_{\cB(\cH)}\leq C \big\|\big[( A + I_{\cH})^{-m} - ( B + I_{\cH})^{-m}\big]\big\|_{\cB(\cH)}\big). 
\end{align} 
\end{proposition}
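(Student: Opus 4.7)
The plan is to formalize the argument sketched in the paragraphs immediately preceding the statement. With $\psi(\lambda) := (\lambda+1)^{-m}$ and $g := f\circ \psi^{-1}$, I would first record the double operator integral identity
\[
f(A) - f(B) = g(\psi(A)) - g(\psi(B)) = \cJ^{A,B}_{g^{[1]}}\bigl(\psi(A)-\psi(B)\bigr),
\]
noting that $\psi(A) - \psi(B) = (A+I_{\cH})^{-m} - (B+I_{\cH})^{-m}$ is precisely the operator whose trace ideal membership is assumed in \eqref{A.44}. The semiboundedness hypothesis $A, B \geq \varepsilon I_{\cH}$ guarantees that $\sigma(\psi(A)) \cup \sigma(\psi(B)) \subset (0, (1+\varepsilon)^{-m}] \subset (0,1]$, which is exactly the range on which the regularity conditions on $g$ are imposed; in particular, both $\psi(A)$ and $\psi(B)$ are bounded self-adjoint operators with spectrum in $(0,1]$, placing us squarely within the setting of the Peller and Peller--Sukochev multiplier theorems.

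For $p \in (1,\infty)$, I would invoke \cite[Theorem~1]{PS11}, which asserts that if $g$ is globally Lipschitz, then its divided difference $g^{[1]}$ belongs to the Schur multiplier class $\gM_p$; equivalently, $\cJ^{A,B}_{g^{[1]}}$ is a bounded operator on $\cB_p(\cH)$ with norm controlled by a multiple of the Lipschitz constant of $g$. Combining this bound with the DOI identity above and with \eqref{A.44} yields the first inequality in \eqref{A.estimate} for some $C_p = C_p(f) \in (0,\infty)$. For the endpoint cases $p=1$ and the operator norm on $\cB(\cH)$, I would appeal instead to \cite[Theorem~8]{Pe85}: the hypothesis $g \in B^1_{\infty,1}$ implies $g^{[1]} \in \gM = \gM_1 = \gM_\infty$, so that $\cJ^{A,B}_{g^{[1]}}$ extends boundedly and simultaneously to both $\cB_1(\cH)$ and $\cB(\cH)$ with a common constant $C = C(f)$. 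The two remaining estimates in \eqref{A.estimate} then follow immediately from the DOI identity.

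The main point requiring care is that the multiplier theorems of Peller and Peller--Sukochev are typically formulated for functions defined on all of $\bbR$ (or on $\bbT$), while here $g$ is only constrained to be regular on the bounded subinterval $(0,1]$ containing the joint spectrum. This is dealt with in the standard way: one extends $g$ to a function on $\bbR$ with comparable Lipschitz constant (respectively, comparable $B^1_{\infty,1}$ norm); since the operator integral in \eqref{A.2} is supported on $\sigma(\psi(A)) \times \sigma(\psi(B)) \subset (0,1] \times (0,1]$, the value of $\cJ^{A,B}_{g^{[1]}}$ is unaffected by the extension. Apart from this essentially cosmetic bookkeeping step, the proof consists of nothing more than invoking the two cited multiplier theorems, so no genuine technical obstacle stands between the identity displayed above and the claimed inequalities.
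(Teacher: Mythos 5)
Your proposal is correct and follows essentially the same route as the paper: the identity $f(A)-f(B)=\cJ^{A,B}_{g^{[1]}}(\psi(A)-\psi(B))$ combined with \cite[Theorem~1]{PS11} for the Lipschitz/$\cB_p(\cH)$, $p\in(1,\infty)$, case and \cite[Theorem~8]{Pe85} for the Besov/$\cB_1(\cH)$ and $\cB(\cH)$ cases is exactly the argument given in the paragraphs preceding the statement. Your extra remark on extending $g$ from the spectral interval $(0,1]$ to the full line is a reasonable piece of bookkeeping that the paper leaves implicit.
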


\begin{remark} \lb{rA.2}
$(i)$ Assume that \eqref{A.44} holds for $p \in (1,\infty)$ and let $f\circ \psi^{-1}\in B_{\infty,1}^1$. Since in this case $g^{[1]}\in\gM$, one also concludes that  $g^{[1]}\in\gM_p$ and $\|g^{[1]}\|_{\gM_p}\leq \|g^{[1]}\|_{\gM}$, $p \in (0,\infty)$. Therefore, the dependence of the constant in \eqref{A.estimate} on 
$p$ can be eliminated. \\ 
$(ii)$ On the other hand, if one is interested in the $p$-dependence of such a constant, the following can be asserted: Assume that \eqref{A.44} holds for $p \in (1,\infty)$ and let $f\circ \psi^{-1}$ be a (globally) Lipschitz function. It follows from \cite[Corollary~5.5]{CM-SPS14} that there exists a constant 
$C = C(f) \in (0, \infty)$, independent of $p$, such that 
\begin{equation} 
\|f(A)-f(B)\|_{\cB_p(\cH)}\leq C\frac{p^2}{p-1}
\big\|\big[( A + I_{\cH})^{-m} - ( B + I_{\cH})^{-m}\big]\big\|_{\cB_p(\cH)}, \quad p \in (1,\infty).
\end{equation} 
${}$ \hfill $\diamond$
\end{remark}

To have a result similar to Theorem \ref{tZ.10} we need to impose additional assumptions on the function $f$. 

\begin{theorem}\lb{tA.7}
Let $A,B$ be strictly positive self-adjoint operators on a Hilbert space $\cH$, and let the families 
$\bigl\{A_n \bigr\}_{n = 1}^\infty$ and $\bigl\{B_n \bigr\}_{n = 1}^\infty $ of strictly positive self-adjoint operators converging to $A$ and $B$, respectively, in the strong resolvent sense $($i.e., we 
assume \eqref{2.36A}$)$. Suppose that for fixed $m\in \bbN$, $m$ odd, and $p \in [1,\infty)$, 
\begin{equation}\lb{A.46}
\begin{split}
T &:= \big[( A + I_{\cH})^{-m} - ( B + I_{\cH})^{-m}\big] \in \cB_p(\cH),\\
T_n &:= \big[( A_n + I_{\cH})^{-m} - ( B_n + I_{\cH})^{-m}\big] \in
\cB_p(\cH), \quad n\in \bbN,
\end{split}
\end{equation}
and
\begin{equation} 
\lim_{n \rightarrow \infty} \|T_n - T\|_{\cB_p(\cH)}=0.   \lb{A.47} 
\end{equation}
Assume that $g=f\circ \psi^{-1}$ is a $($globally$)$ Lipschitz function on $(0,1]$ if $p>1$, and let 
$f\circ \psi^{-1}\in B_{\infty,1}^1$ if $p=1$. Assume, in addition, that $g'$ is a bounded function on $(0,1]$ satisfying a H\"older condition for some $\varepsilon>0$. Then 
\begin{align} 
& \lim_{n \rightarrow \infty}  \big\|\big[f(A_n) - f(B_n) \big] - \big[f(A)
- f(B)\big]\big\|_{\cB_p(\cH)}=0   \lb{A.48}.   
\end{align}
\end{theorem}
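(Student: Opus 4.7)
\medskip

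\noindent
\textbf{Proof plan.} The strategy parallels that of Theorem \ref{tZ.10}, but the Cayley-transform passage to unitaries in Section \ref{s3} is unnecessary here: the operators $\psi(A)=(A+I_{\cH})^{-m}$ and $\psi(B)=(B+I_{\cH})^{-m}$ (with $\psi(\lambda)=(\lambda+1)^{-m}$) are already bounded self-adjoint with spectra contained in $(0,1]$, so the DOI machinery can be applied directly to the pair $(\psi(A),\psi(B))$ and its approximants $(\psi(A_n),\psi(B_n))$.

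\medskip

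\noindent
\textbf{Step 1 (DOI representation).} Using $g:=f\circ\psi^{-1}$ and the identity
\begin{equation*}
f(X)-f(Y) = g(\psi(X))-g(\psi(Y)) = \cJ^{\psi(X),\psi(Y)}_{g^{[1]}}(\psi(X)-\psi(Y))
\end{equation*}
(valid for $X\in\{A,A_n\}$, $Y\in\{B,B_n\}$), together with $\psi(A_n)-\psi(B_n)=T_n$ and $\psi(A)-\psi(B)=T$, one writes the target difference as
\begin{align*}
[f(A_n)-f(B_n)] - [f(A)-f(B)]
&= \cJ^{\psi(A_n),\psi(B_n)}_{g^{[1]}}(T_n - T) \\
&\quad + \big[\cJ^{\psi(A_n),\psi(B_n)}_{g^{[1]}}(T) - \cJ^{\psi(A),\psi(B)}_{g^{[1]}}(T)\big].
\end{align*}
It suffices to show each of the two summands converges to $0$ in $\cB_p(\cH)$.

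\medskip

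\noindent
\textbf{Step 2 (uniform DOI bound; first summand).} The hypotheses on $g$ place $g^{[1]}$ in $\gM_p$: for $p>1$, Lipschitz regularity of $g$ invokes \cite[Theorem~1]{PS11}, and for $p=1$, the containment $g\in B^1_{\infty,1}$ invokes \cite[Theorem~8]{Pe85}; in both cases the resulting bound on $\|\cJ^{U,V}_{g^{[1]}}\|_{\cB(\cB_p(\cH))}$ depends only on $g$ (and on $p$), not on the particular self-adjoint operators $U,V$ with spectra in $(0,1]$. Therefore
\begin{equation*}
\big\|\cJ^{\psi(A_n),\psi(B_n)}_{g^{[1]}}(T_n-T)\big\|_{\cB_p(\cH)} \leq C(g,p)\,\|T_n-T\|_{\cB_p(\cH)} \underset{n\to\infty}{\longrightarrow} 0
\end{equation*}
by hypothesis \eqref{A.47}.

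\medskip

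\noindent
\textbf{Step 3 (DOI convergence; second summand).} Strong resolvent convergence $A_n\to A$, $B_n\to B$ combined with functional calculus (\cite[Theorem~VIII.20]{RS80}) yields $\psi(A_n)\to\psi(A)$ and $\psi(B_n)\to\psi(B)$ strongly. The Hölder regularity of $g'$ with exponent $\varepsilon>0$ on the compact interval containing the spectra of all $\psi(\cdot)$ places $g^{[1]}$ in the class $\gA_r^s(E_{\psi(A)})\cap \gA_l^s(E_{\psi(B)})$ by the analogues for bounded self-adjoint operators with compactly supported spectra of the results used at the end of the proof of Theorem \ref{tZ.10} (namely \cite[Proposition~7.5 and Theorem~5.9]{BS73}, together with the discussion following the latter theorem). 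Proposition \ref{pZ.7} then applies to the fixed operator $T\in \cB_p(\cH)$, giving
\begin{equation*}
\lim_{n\to\infty} \big\|\cJ^{\psi(A_n),\psi(B_n)}_{g^{[1]}}(T) - \cJ^{\psi(A),\psi(B)}_{g^{[1]}}(T)\big\|_{\cB_p(\cH)} = 0.
\end{equation*}
Combining Steps 1--3 yields \eqref{A.48}.

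\medskip

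\noindent
\textbf{Main obstacle.} The only delicate point is verifying the membership $g^{[1]}\in \gA_r^s(E_{\psi(A)})\cap \gA_l^s(E_{\psi(B)})$ from the Hölder hypothesis on $g'$: this mirrors exactly the step carried out for unitaries in the proof of Theorem \ref{tZ.10}, but one must confirm that the cited Birman--Solomyak results adapt to self-adjoint operators with spectra in the compact set $(0,1]$ (which is standard, as both the unitary and the bounded self-adjoint settings rest on Fourier-analytic arguments on a compact group, respectively on a compact interval via a smooth change of variables). Once this is granted, the rest of the argument is a routine telescoping based on the uniform DOI bound of Step 2 and the strong-convergence DOI theorem of Step 3.
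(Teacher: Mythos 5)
Your proposal is correct and follows essentially the same route as the paper: the identical telescoping decomposition into $\cJ_{g^{[1]}}(T_n-T)$ plus $\big[\cJ^{(n)}_{g^{[1]}}(T)-\cJ_{g^{[1]}}(T)\big]$, the first term controlled by the operator-independent DOI bounds from \cite{PS11}/\cite{Pe85}, and the second by placing $g^{[1]}$ in $\gA_r^s\cap\gA_l^s$ via the H\"older condition on $g'$ and the Birman--Solomyak results, then invoking Proposition \ref{pZ.7}. Your explicit passage to the bounded self-adjoint operators $\psi(A),\psi(B)$ is just an unwinding of the paper's notational shorthand $\cJ^{A,B}_{g^{[1]}}$, and your flagged "main obstacle" is exactly what the paper resolves by citing the self-adjoint analogues \cite[Proposition~7.8 and Theorem~5.7]{BS73} in place of the unitary versions.
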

\begin{proof}
Writing 
\begin{align} 
\begin{split} 
& \big[f(A_n) - f(B_n) \big] - \big[f(A)- f(B)] 
= \mathcal{J}_{g^{[1]}}^{A_n,B_n}(T_n)-\mathcal{J}_{g^{[1]}}^{A,B}(T)     \\
& \quad = \mathcal{J}_{g^{[1]}}^{A_n,B_n}(T_n-T) 
+ \big(\mathcal{J}_{g^{[1]}}^{A,B}(T)-\mathcal{J}_{g^{[1]}}^{A,B}(T)\big). 
\end{split} 
\end{align}
The convergence of the first term on the right hand-side above can be proved as in Theorem \ref{tZ.10}. To prove the convergence of the second term, it is sufficient to show that $g^{[1]}\in\gA_l^s(E_B)\cap\gA_r^s(E_A)$. Since, by the assumption $g'$ is a bounded function  $g'$ satisfying the H\"older condition for some $\varepsilon>0$ \cite[Proposition 7.8 and Theorem 5.7]{BS73} imply that $g^{[1]}\in\gA_l^s(E_B)\cap\gA_r^s(E_A)$, and hence by Proposition \ref{pZ.7}  we conclude that 
\begin{equation} 
\lim_{n\to\infty}\big\|(\mathcal{J}_{g^{[1]}}^{A,B}(T)-\mathcal{J}_{g^{[1]}}^{A,B}(T)\big\|_{\cB_p(\cH)}=0.
\end{equation} 
\end{proof}


\noindent
{\bf Acknowledgments.} 
We are indebted to Dmitriy Zanin for his interest and insightful comments and would like 
to thank Marcus Waurick for discussions and helpful correspondence. We are particularly grateful 
to the anonymous referee for pointing out an error in our original proof of Theorem 2.12 which led to considerable modifications throughout this paper.   

 
\end{document}